\theoremstyle{plain}
\newtheorem{theorem}{Theorem}
\newtheorem{proposition}[theorem]{Proposition}
\newtheorem{corollary}[theorem]{Corollary}
\newtheorem{lemma}[theorem]{Lemma}
\theoremstyle{definition}
\theoremstyle{remark}
\begin{document}

\title{Bounded-hop percolation}
\def\MLine#1{\par\hspace*{-\leftmargin}\parbox{\textwidth}{\[#1\]}}

\def\A{\mathbb{A}}
\def\Ab{\mathcal{A}b}
\def\absq{{a^{\prime}}^2+{b^\prime}^2}
\def\AP{\text{G}}
\def\app{{a^{\prime\prime}}^2+1}
\def\argmin{\text{argmin}}
\def\arb{arbitrary }
\def\ass{assumption}
\def\arrow{\rightarrow}
\def\BT{B^{\mathbb{T}_n}}
\def\codim{\text{codim}}
\def\const{c}
\def\CCG{\text{G}}
\def\colim{\text{colim}}
\def\cond{condition }
\def\C{\mbox{\bf C}}
\def\ct{\mathsf{ct}}
\def\d{{\rm d}}
\def\dell{\partial}
\def\diam{\text{diam}}
\def\E{\mathbb{E}}
\def\envi{\mathsf{env}}
\def\enviIn{\partial^{\mathsf{in}}}
\def\enviOut{\partial^{\mathsf{out}}}
\def\enviInn{\partial^{\mathsf{in},*}}
\def\enviStab{\mathsf{env}_{\mathsf{stab}}}
\def\Et{\text{Et}}
\def\es{\emptyset}
\def\exp{\text{exp}}
\def\fa{for all }
\def\Fk{\mathcal{F}_{k_0}}
\def\Fm{Furthermore}
\def\G{\mathbb{G}}
\def\gr{\text{gr}}
\def\hge{h_{\mathsf{g}}}
\def\hco{h_{\mathsf{c}}}
\def\H{\text{H}}
\def\Hom{\text{Hom}}
\def\Hs{\widetilde{X}_{H,0}}
\def\inj{\hookrightarrow}
\def\id{\text{id}}
\def\iiets{it is easy to see }
\def\iietc{it is easy to check }
\def\Iietc{It is easy to check }
\def\Iiets{It is easy to see }
\def\imp{\Rightarrow}
\def\({\big(}
\def\){\big)}
\def\lver{\big|}
\def\rver{\big|}
\def\lcu{\big\{}
\def\rcu{\big\}}
\def\im{\mbox{im}}
\def\inn{\mathsf{in}}
\def\Inv{\text{Inv}}
\def\Ind{\text{Ind}}
\def\Ip{In particular}
\def\ip{in particular }
\def\LB{\text{LB}}
\def\Lo{\mathcal{L}^o}
\def\mc{\mathcal}
\def\mb{\mathbb}
\def\mf{\mathbf}
\def\Hp{\wt{X}_{H,0}^{'}}
\def\M{\mathbb{M}}
\def\Mo{Moreover}
\def\G{\mathbb{G}}
\def\GR{\mathsf{GR}}
\def\N{\mathbb{N}}
\def\Npo{\mathbf{N}_{\mathcal{P}^o}}
\def\k{\overline{k}}
\def\K{\underline{K}}
\def\LE{\mathsf{LE}}
\def\ldot{.}
\def\lmid{\;\middle\vert\;}
\def\O{\mathcal{O}}
\def\Ob{Observe }
\def\ob{observe }
\def\out{\mathsf{out}}
\def\Otoh{On the other hand}
\def\opartial{\partial^{\text{out}}}
\def\ipartial{\partial^{\text{in}}}
\def\eopartial{\partial^{\text{out}}_{\text{ext}}}
\def\eipartial{\partial^{\text{in}}_{\text{ext}}}
\def\p{\prime}
\def\pred{\mathsf{pred}}
\def\Trace{\mathsf{Trace}}
\def\pp{{\prime\prime}}
\def\Po{\mathcal{P}^0}
\def\P{\mathbb{P}}
\def\Proj{\mbox{\bf P}}
\def\Q{\mathbb{Q}}
\def\QQ{\overline{\Q}}
\def\pr{\text{pr}}
\def\R{\mathbb{R}}
\def\rstab{R_{\mathsf{stab}}}
\def\Spec{\text{Spec}}
\def\st{such that }
\def\sl{sufficiently large }
\def\ss{sufficiently small }
\def\sot{so that }
\def\su{suppose }
\def\succ{\mathsf{succ}}
\def\Su{Suppose }
\def\suf{sufficiently }
\def\udot{\mathaccent\cdot\cup}
\def\Set{\mathcal{S}et}
\def\T{\mathbb{T}}
\def\Twh{Then we have }
\def\Tes{There exists }
\def\te{there exist }
\def\tes{there exists }
\def\tptc{this proves the claim}
\def\Map{\text{Map}}
\def\VLo{\mc{VL}^o}
\def\wt{\widetilde}
\def\Wcon{We conclude }
\def\wcon{we conclude }
\def\wc{we compute }
\def\Wc{We compute }
\def\wo{we obtain }
\def\wh{we have }
\def\Wh{We have }
\def\Z{\mathbb{Z}}
\def\ZSlab{\mathbb{Z}^2_L\times\{0\}^{d-2}}

\author{Christian Hirsch}
\thanks{Weierstrass Institute Berlin, Mohrenstr. 39, 10117 Berlin, Germany; E-mail: {\tt hirsch@wias-berlin.de}.}

\begin{abstract}
Motivated by an application in wireless telecommunication networks, we consider a two-type continuum-percolation problem involving a homogeneous Poisson point process of users and a stationary and ergodic point process of base stations. Starting from a randomly chosen point of the Poisson point process, we investigate distribution of the minimum number of hops that are needed to reach some point of the second point process.
In the supercritical regime of continuum percolation, we use the close relationship between Euclidean and chemical distance to identify the distributional limit of the rescaled minimum number of hops that are needed to connect a typical Poisson point to a point of the second point process as its intensity tends to infinity. In particular, we obtain an explicit expression for the asymptotic probability that a typical Poisson point connects to a point of the second point process in a given number of hops.
\end{abstract}
\begin{keywords}{ad hoc network, chemical distance, connection probability, continuum percolation
}
\end{keywords}
\subjclass[2010]{Primary 60K35; Secondary 60D05}

\maketitle

\section{Introduction and main results}
\label{defSec}
We consider a model for a wireless telecommunication network where users are scattered at random in the entire Euclidean plane. In order to meet the users' communication demands, the operator sustains a network of base stations. In classical cellular networks, the base stations subdivide the plane into serving zones and all users inside a serving zone communicate directly with the associated base station. Although such networks exhibit a simple hierarchical topology, installation and upkeep are costly. Indeed, to guarantee  good quality of service to all users, the operator either needs to install (and maintain) a relatively dense network of base stations, or the base stations' transmission powers must be sufficiently high so that also distant users can be served.

Since the advent of LTE technology, operators have the possibility to reduce the number of required base stations substantially by using relays. As of today, this means installing fixed relays at locations that have been chosen in advance. For future generation networks it is desirable to extend this concept through the intelligent use of \emph{ad hoc technology}. To be more precise, we assume that each user has a (comparatively small) transmission radius. A direct communication between users is possible if they are within each others communication radii. Additionally, by forwarding messages via chains of directly connected users, base stations can communicate with distant users, even if transmission radii are comparatively small. 

Despite these virtues, having users act as relays entails a major drawback when it comes to quality of service for delay-sensitive applications. Indeed, the forwarding of messages via several hops induces substantial delay in message transmission. Hence, in network planning, it is crucial to have detailed knowledge of distributional properties of the minimum number of hops to a base station. 

In the random-graphs community, the minimum number of hops that are needed to connect two vertices of a graph is known as \emph{chemical distance}. In supercritical Bernoulli percolation on the lattice, chemical distance has been investigated in~\cite{antalPhD,antPiszt}. Loosely speaking, for distant points in the infinite connected component, the chemical distance is approximately proportional to the Euclidean distance, where the proportionality factor is called \emph{time constant}. The extension of this result to the setting of continuum percolation~\cite{yao} will be the major tool for establishing the distributional limit of the rescaled minimum number of hops needed to connect a user to a base station.

Next, we provide a precise definition of the wireless spatial telecommunication network under consideration. It consists of two types of network components. The first component is formed by network users. They are modeled by a homogeneous Poisson point process $X$ in $\R^d$, $d\ge2$ with some intensity $\lambda\in(0,\infty)$. The base stations constitute the second component. We assume that they are of the form $Y=rY^{(1)}$, where $Y^{(1)}$ is assumed to be a stationary and ergodic point process that is independent of $X$ and has a finite and positive intensity $\lambda'$. Here, $r\ge0$ is some scaling parameter controlling the intensity of base stations. Since we only assume stationarity and ergodicity, our results are valid under quite weak conditions on the spatial distribution of base stations. For instance, they can be applied to homogeneous Poisson point processes as well as randomly shifted lattices. In other words, our results do not depend on the question whether the base stations are scattered at random in the Euclidean plane or are aligned according to a grid that is viewed from a random reference point.

The random network under consideration can be thought of as a model for a wireless telecommunication network, where users can connect to base stations indirectly via at most $k\ge1$ hops of Euclidean distance at most $1$ to other network users. To be more precise, we say that $x,y\in\R^d$ are \emph{$k$-connectable} if there exist (not necessarily distinct) $X_{i_1},X_{i_2},\ldots,X_{i_{k-1}}\in X$ such that $|X_{i_j}-X_{i_{j+1}}|\le 1$ for all $j\in\{0,\ldots,k-1\}$, where $X_{i_0}=x$ and $X_{i_k}=y$. Here, $|\cdot|$ denotes the standard Euclidean norm in $\R^d$. We say that $x,y$ are \emph{connectable} if they are $k$-connectable for \emph{some} $k\ge1$. Figure~\ref{toyFig} shows a realization of the network model, where the points of $X$ and $Y$ are represented by dots and squares, respectively.
Points of $X$ that are $1$-connectable to some point of $Y$ are shown in blue, while points of $X$ that are $2$-connectable but not $1$-connectable to some point in $Y$ appear in green.

\begin{figure}[!htpb]
 \centering 
\begin{tikzpicture}[scale=0.85]
\fill (-2.1,-2.1) rectangle (-1.9,-1.9);\fill (-2.1,-0.1) rectangle (-1.9,0.1);\fill (-2.1,1.9) rectangle (-1.9,2.1);\fill (-0.1,-2.1) rectangle (0.1,-1.9);\fill (-0.1,-0.1) rectangle (0.1,0.1);\fill (-0.1,1.9) rectangle (0.1,2.1);\fill (1.9,-2.1) rectangle (2.1,-1.9);\fill (1.9,-0.1) rectangle (2.1,0.1);\fill (1.9,1.9) rectangle (2.1,2.1);
\fill (1.01,1.72) circle (2pt);
\fill (-0.6,2.91) circle (2pt);
\fill (2.96,1.58) circle (2pt);
\fill[color=green] (-1.37,2.66) circle (2pt);
\draw[color=green] (-1.37,2.66) -- (-1.68,2.58);
\fill[color=green] (-1.2,2.56) circle (2pt);
\draw[color=green] (-1.2,2.56) -- (-1.68,2.58);
\fill (2.55,-2.99) circle (2pt);
\fill (-0.49,-1.25) circle (2pt);
\fill (0.58,-0.72) circle (2pt);
\fill (-2.86,1.54) circle (2pt);
\fill[color=green] (-1.23,2.31) circle (2pt);
\draw[color=green] (-1.23,2.31) -- (-1.68,2.58);
\fill[color=green] (-1.23,2.31) circle (2pt);
\draw[color=green] (-1.23,2.31) -- (-1.53,1.74);
\fill (2.79,-1.61) circle (2pt);
\fill[color=green] (0.92,-1.19) circle (2pt);
\draw[color=green] (0.92,-1.19) -- (1.33,-1.71);
\fill[color=green] (1.13,2.87) circle (2pt);
\draw[color=green] (1.13,2.87) -- (0.49,2.42);
\fill (2.59,2.9) circle (2pt);
\fill[color=green] (-2.41,2.93) circle (2pt);
\draw[color=green] (-2.41,2.93) -- (-2.18,2.66);
\fill[color=green] (-0.69,-0.48) circle (2pt);
\draw[color=green] (-0.69,-0.48) -- (-0.15,-0.27);
\fill[color=green] (-2.03,-2.98) circle (2pt);
\draw[color=green] (-2.03,-2.98) -- (-1.74,-2.68);
\fill[color=green] (-2.03,-2.98) circle (2pt);
\draw[color=green] (-2.03,-2.98) -- (-2.27,-2.61);
\fill (0.96,0.09) circle (2pt);
\fill (1.14,0.86) circle (2pt);
\fill[color=blue] (0.35,2.47) circle (2pt);
\draw[color=blue] (0.35,2.47) -- (0,2);
\fill[color=blue] (0.49,2.42) circle (2pt);
\draw[color=blue] (0.49,2.42) -- (0,2);
\fill[color=blue] (0.19,-1.7) circle (2pt);
\draw[color=blue] (0.19,-1.7) -- (0,-2);
\fill[color=blue] (-1.79,0.05) circle (2pt);
\draw[color=blue] (-1.79,0.05) -- (-2,0);
\fill[color=blue] (-1.68,2.58) circle (2pt);
\draw[color=blue] (-1.68,2.58) -- (-2,2);
\fill[color=blue] (-0.1,0.64) circle (2pt);
\draw[color=blue] (-0.1,0.64) -- (0,0);
\fill[color=blue] (-1.52,0.48) circle (2pt);
\draw[color=blue] (-1.52,0.48) -- (-2,0);
\fill[color=blue] (0.21,-2.75) circle (2pt);
\draw[color=blue] (0.21,-2.75) -- (0,-2);
\fill[color=blue] (-0.05,-2.79) circle (2pt);
\draw[color=blue] (-0.05,-2.79) -- (0,-2);
\fill[color=blue] (-1.53,1.74) circle (2pt);
\draw[color=blue] (-1.53,1.74) -- (-2,2);
\fill[color=blue] (-2.18,2.66) circle (2pt);
\draw[color=blue] (-2.18,2.66) -- (-2,2);
\fill[color=blue] (-1.44,-2) circle (2pt);
\draw[color=blue] (-1.44,-2) -- (-2,-2);
\fill[color=blue] (-2.04,-1.48) circle (2pt);
\draw[color=blue] (-2.04,-1.48) -- (-2,-2);
\fill[color=blue] (-1.74,-2.68) circle (2pt);
\draw[color=blue] (-1.74,-2.68) -- (-2,-2);
\fill[color=blue] (-0.15,-0.27) circle (2pt);
\draw[color=blue] (-0.15,-0.27) -- (0,0);
\fill[color=blue] (-0.08,0.39) circle (2pt);
\draw[color=blue] (-0.08,0.39) -- (0,0);
\fill[color=blue] (1.33,-1.71) circle (2pt);
\draw[color=blue] (1.33,-1.71) -- (2,-2);
\fill[color=blue] (-2.27,-2.61) circle (2pt);
\draw[color=blue] (-2.27,-2.61) -- (-2,-2);
\fill[color=blue] (-2.6,0.3) circle (2pt);
\draw[color=blue] (-2.6,0.3) -- (-2,0);
\fill[color=blue] (1.92,-1.7) circle (2pt);
\draw[color=blue] (1.92,-1.7) -- (2,-2);
\fill[color=blue] (-1.3,0.16) circle (2pt);
\draw[color=blue] (-1.3,0.16) -- (-2,0);

\end{tikzpicture}
 \caption{Realization of network model}\label{toyFig} 
\end{figure}
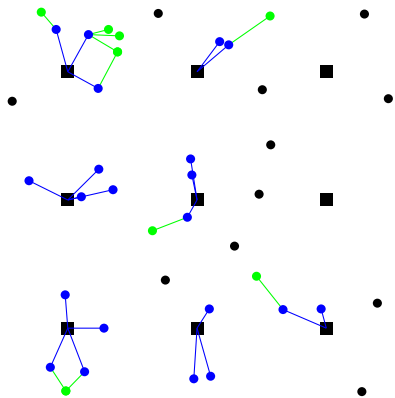

In the following, we write $H_r(x)$ for the smallest number $k\ge1$ such that $x\in\R^d$ is $k$-connectable to some point of $Y=rY^{(1)}$. The main object of investigation in this paper is the quantity
$$\Theta(k,r)=\lambda^{-1}\E\#\{X_i\in X\cap[-1/2,1/2]^d:\, H_r({X_i})\le k\},$$
i.e., the normalized expected number of points in $X\cap [-1/2,1/2]^d$ that are $k$-connectable to some base station. In fact, we show that $\Theta(k,r)$ admits a more natural representation as limiting quantity of the average number of points in $X$ inside a large box that are $k$-connectable to a point of $Y$. 
\begin{proposition}
\label{defEqProp}
Let $k\ge1$ and $r>0$. Then, almost surely,
$$\Theta(k,r)=\lim_{n\to\infty}\lambda^{-1}n^{-d}\#\{X_i\in X\cap[-n/2,n/2]^d: H_r({X_i})\le k\}.$$
\end{proposition}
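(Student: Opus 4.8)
The plan is to realize the random count in the proposition as the spatial average of a stationary and ergodic random measure, and then to invoke the multidimensional ergodic theorem.

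First I would record the relevant invariance. For $t\in\R^d$ let $T_t$ denote translation by $t$, acting diagonally on the pair $(X,Y)$. Since $X$ is a homogeneous Poisson point process and $Y=rY^{(1)}$ is a scaled stationary point process --- scaling preserves stationarity because shifting $rY^{(1)}$ by $t$ amounts to shifting $Y^{(1)}$ by $t/r$ --- the joint law of $(X,Y)$ is invariant under $(T_t)_{t\in\R^d}$. Moreover, $k$-connectability is translation covariant: $x$ is $k$-connectable to $Y$ through points of $X$ if and only if $x+t$ is $k$-connectable to $Y+t$ through points of $X+t$. Consequently the event $\{H_r(x)\le k\}$ evaluated for $(X,Y)$ coincides with the event $\{H_r(x+t)\le k\}$ evaluated for $(X+t,Y+t)$.

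Next I would introduce the random measure
$$\xi=\sum_{X_i\in X}\mathbf 1\{H_r(X_i)\le k\}\,\delta_{X_i},$$
which is a measurable, translation-covariant functional of $(X,Y)$ and hence stationary. Since $\xi\le\sum_i\delta_{X_i}$, its intensity is at most $\lambda<\infty$, and by stationarity the intensity equals $\bar\xi:=\E\,\xi([-1/2,1/2]^d)=\lambda\,\Theta(k,r)$, where the last identity is just the definition of $\Theta(k,r)$. The key structural input is that $(X,Y)$ is not merely stationary but ergodic: the Poisson process $X$ is mixing under $(T_t)_{t\in\R^d}$, the process $Y^{(1)}$ and hence $Y$ is ergodic by assumption, and, as $X$ and $Y$ are independent, the joint law is the corresponding product measure; the product of a mixing system with an ergodic system is ergodic. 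Being a covariant factor of $(X,Y)$, the random measure $\xi$ is then itself ergodic.

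Finally, as $[-n/2,n/2]^d$ is a convex averaging sequence of volume $n^d$, the multidimensional ergodic theorem for stationary ergodic random measures yields, almost surely,
$$n^{-d}\,\xi\big([-n/2,n/2]^d\big)\longrightarrow\bar\xi=\lambda\,\Theta(k,r),$$
and dividing by $\lambda$ gives precisely the asserted identity. The only genuinely nontrivial step is the ergodicity of the joint process $(X,Y)$; the measurability and translation covariance of $x\mapsto H_r(x)$, the finiteness of the intensity of $\xi$, the verification that the cubes form a valid averaging sequence, and the identification of the limit with $\Theta(k,r)$ are all routine.
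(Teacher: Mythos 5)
Your proof is correct and follows essentially the same route as the paper: both rest on the ergodicity of the pair $(X,Y)$ (mixing Poisson process times ergodic process) and an application of the multidimensional ergodic theorem. The only cosmetic difference is that you apply the ergodic theorem directly to the point-counting random measure $\xi$, whereas the paper averages the window counts $W_z$ over $z\in[-m/2,m/2]^d$ and then recovers the discrete count via a sandwich between $\Xi_{n-1}$ and $\Xi_{n+1}$; your formulation absorbs that step into the standard ergodic theorem for stationary random measures.
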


 Provided that $k$ and $r$ are of the same order, the asymptotic behavior of $\Theta(k,r)$ depends sensitively on whether the intensity $\lambda$ is below or above the critical intensity $\lambda_c$ in continuum percolation. To be more precise, $\lambda_c$ is the infimum over all intensities $\lambda>0$ for which the union $\cup_{i=1}^\infty B_{1/2}(X_i)$ of balls of radius $1/2$ centered at points $X_i$ almost surely has an unbounded connected component.

In the sub-critical regime, $\Theta(k,r)$ decays polynomially in $r$ as $r\to\infty$.

\begin{theorem}
\label{subCrit}
Let $\lambda<\lambda_c$ and $r>0$. Then, 
$$\sup_{k\ge1}\Theta(k,r)\le\lambda^{-1}r^{-d}\lambda'\E\#C(o),$$ 
where $C(o)$ denotes the set of all $X_i\in X$ that are connectable to the origin.
\end{theorem}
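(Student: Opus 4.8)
The plan is to first replace the supremum over the hop number $k$ by the event of being connectable to $Y$ at all, and then bound the resulting expected cluster contribution by a Mecke--Campbell computation in which the two point processes decouple. Throughout write $B=[-1/2,1/2]^d$ and say that $x\leftrightarrow y$ when $x$ and $y$ are connectable. Since the events $\{H_r(X_i)\le k\}$ increase to $\{H_r(X_i)<\infty\}=\{X_i\leftrightarrow Y_j\text{ for some }Y_j\in Y\}$ as $k\to\infty$, monotone convergence yields $\sup_{k\ge1}\Theta(k,r)=\lambda^{-1}\E N$, where $N=\#\{X_i\in X\cap B:\,X_i\leftrightarrow Y_j\text{ for some }Y_j\in Y\}$. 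Writing $C(y)=\{X_i\in X:\,X_i\leftrightarrow y\}$, so that $C(o)$ is exactly the set in the statement, a union bound over the base stations gives $N\le\sum_{Y_j\in Y}\#(C(Y_j)\cap B)$.

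Next I would condition on $Y$, which is independent of $X$, and evaluate the conditional expectation by the Mecke equation for the Poisson process $X$. For a fixed point $y$,
$$\E_X\#\big(C(y)\cap B\big)=\E_X\sum_{X_i\in X}\mathbf 1\{X_i\in B,\,X_i\leftrightarrow y\}=\lambda\int_B\P\big(x\leftrightarrow y\text{ in }X\cup\{x\}\big)\,\d x.$$
The only point to check here is that adjoining the endpoint $x$ to $X$ does not change whether $x$ is connectable to $y$, since a connecting chain never needs to use its own starting point as a relay; thus the integrand equals $\P(x\leftrightarrow y)$, which by stationarity of $X$ equals $\P(o\leftrightarrow y-x)$. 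Taking the expectation over $Y$ and applying the Campbell formula to the stationary process $Y=rY^{(1)}$, whose intensity is $\lambda'r^{-d}$, I obtain (with $\P$ the law of $X$)
$$\E N\le\lambda\int_B\E_Y\!\sum_{Y_j\in Y}\P(o\leftrightarrow Y_j-x)\,\d x=\lambda\lambda'r^{-d}\int_B\!\!\int_{\R^d}\P(o\leftrightarrow w)\,\d w\,\d x,$$
where the substitution $w=z-x$ removes the dependence on the integration variable $x$.

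Finally, since $\lvert B\rvert=1$ and a second application of the Mecke equation to $X$ gives $\E\#C(o)=\lambda\int_{\R^d}\P(o\leftrightarrow w)\,\d w$, the two displays combine to $\E N\le\lambda'r^{-d}\E\#C(o)$, that is, $\sup_{k\ge1}\Theta(k,r)=\lambda^{-1}\E N\le\lambda^{-1}r^{-d}\lambda'\,\E\#C(o)$. The argument is mostly bookkeeping once the processes are decoupled; the genuinely substantive step is the reduction from the supremum over $k$ to connectability together with the union bound, which converts the hop-count statement into a cluster-size estimate. I expect the only real care to be needed in justifying the two Mecke steps and the change of variables; note that subcriticality $\lambda<\lambda_c$ plays no role in the inequality itself but is precisely what makes $\E\#C(o)$ finite and hence the bound a true polynomial decay in $r$.
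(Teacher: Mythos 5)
Your proof is correct, and it reaches the bound by a genuinely different route from the paper. The paper first establishes, for each fixed $k$, the exact identity $\Theta(k,r)=\lambda^{-1}\E\sum_{Y_j\in[-1/2,1/2]^d}\sum_{X_i\in C_k(Y_j)}\kappa(X_i)^{-1}$ via the mass-transport principle on $\Z^d$ (its Lemma~\ref{mtpLem}), then drops the weights $\kappa(X_i)^{-1}\le1$ to get $\Theta(k,r)\le\lambda^{-1}r^{-d}\lambda'\E\#C_k(o)$ (Proposition~\ref{simpUpBound}), and finally uses $\#C_k(o)\le\#C(o)$. You instead pass to the limit $k\to\infty$ at the outset by monotone convergence, apply a union bound over base stations (which plays exactly the role of the bound $\kappa^{-1}\le1$), and then justify the exchange of roles of $X$ and $Y$ analytically, by the Mecke equation for the Poisson process $X$ together with Campbell's formula for the stationary process $Y=rY^{(1)}$ of intensity $\lambda'r^{-d}$ and a change of variables. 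The two justifications of the ``transport'' step are interchangeable here because $X$ and $Y$ are independent; your version is more computational and avoids introducing the discrete mass-transport machinery, while the paper's version yields an exact identity for each finite $k$ (and hence the intermediate bound $\Theta(k,r)\le\lambda^{-1}r^{-d}\lambda'\E\#C_k(o)$, which it reuses for Corollary~\ref{smallKCor}), whereas your argument only controls the supremum over $k$ --- which is all the theorem asks for. Your side remarks are also accurate: adjoining the endpoint $x$ to $X$ in the Mecke step does not affect connectability, and subcriticality enters only through the finiteness of $\E\#C(o)$.
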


Note that for $\lambda<\lambda_c$ we have $\E\#C(o)<\infty$, see e.g.~\cite[Theorem 12.35]{Grim99}.

Next, consider the supercritical case, i.e., let $\lambda>\lambda_c$. By a central result in continuum percolation~\cite[Theorem 2.1]{mRoy}, the set $\cup_{i=1}^\infty B_{1/2}(X_i)$ contains a unique unbounded connected component. In the following, $\mc{C}_\infty\subset X$ denotes the subset of all elements of $X$ that are contained in this unbounded connected component. We write $\theta$ for the probability that there exists $X_i\in\mc{C}_\infty$, with $|X_i|\le1$.

In order to describe the asymptotic behavior of $\Theta(k,r)$ for large $k$ and $r$, it is important to understand that the chemical distance between two points of $\mc{C}_\infty$, i.e., the minimum number of hops needed to establish a connection, grows linearly in the Euclidean distance of the two points. This can be formalized in different ways. 

First, fixing any point $X_i\in\mc{C}_\infty$, there should exist an a.s. finite random variable $\rho_i$ such that for every $X_j\in\mc{C}_\infty$ the chemical distance between $X_i$ and $X_j$ is at most $\rho_i|X_i-X_j|$. As observed in~\cite[Lemma 5.2]{bk1}, when considering Bernoulli site percolation on the lattice, the corresponding result can be derived by adapting the bond percolation argument established in Lemma 2.4 in the thesis of Antal~\cite{antalPhD}.

Additionally, when disregarding points in a small environment of $X_i$, the random variable $\rho_i$ can be replaced by a deterministic quantity $\mu\in(0,\infty)$ that does not depend on $i$. To be more precise, we put $q(x)=X_j$ if $X_j$ is the element of $\mc{C}_\infty$ minimizing the distance to $x\in\R^d$. Then, $D_n$ denotes the minimum integer $k\ge1$ such that $q(o)$ and $q(ne_1)$ are $k$-connectable, where $e_1=(1,0,\ldots,0)$ is the first standard unit vector in $\R^d$. Using Kingman's subadditive ergodic theorem, it is shown in~\cite{yao} that there exists a real number $\mu\in(0,\infty)$ such that almost surely, $\lim_{n\to\infty}n^{-1} D_n=\mu$; see also~\cite{antPiszt} for the corresponding statement on the lattice. 

With this background, we can now provide a heuristic explanation for the asymptotic behavior of $\Theta(k,r)$ if the speed at which $k$ and $r$ tend to infinity is chosen so that their quotient tends to some constant. To be more precise, by the Slivnyak-Mecke theorem~\cite[Corollary 3.2.3]{sWeil}, we have 
$$\Theta(k,r)=\P(r^{-1}{H_r(o)}{}\le r^{-1}{k}).$$
Hence, it suffices to understand the asymptotic distribution of $r^{-1}H_r=r^{-1}H_r(o)$ as $r\to\infty$. 
First, points of $X$ can only connect to points of $Y$ that are contained in the unbounded connected component of continuum percolation and the probability that a given point of $Y$ is contained in the unbounded connected component is given by $\theta$. Hence, instead of 
$rY^{(1)}$ we consider the process of relevant points $rY^{(\theta)}$, where $Y^{(\theta)}$ is obtained from $Y^{(1)}$ by independent thinning with survival probability $\theta$. Then, for a given point of $X$ to be connectable to some point of $Y$, the former must also belong to the unbounded connected component, which occurs with probability $\theta$. Moreover, the closest point of $rY^{(\theta)}$ is at Euclidean distance $r\min\{|y|:y\in Y^{(\theta)}\}$ and it can be reached in at most $\mu r\min\{|y|:y\in Y^{(\theta)}\}$ hops.
This heuristic is made precise in the following result, where we use the convention $0\cdot\infty=0$.

\begin{theorem}
\label{supCritDist}
Let $\lambda>\lambda_c$.
Then, $r^{-1}H_r$ converges in distribution to the random variable 
$$(1-Z)\cdot\infty+Z\mu\min\{|y|:y\in Y^{(\theta)}\},$$
where $Z$ is a Bernoulli random variable that is independent of $Y^{(\theta)}$ and which assumes the value $1$ with probability $\theta$.
\end{theorem}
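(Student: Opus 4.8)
The plan is to identify, for every continuity point $t>0$ of the limit law, the limit of $\P(H_r\le tr)$ and to match it with $\theta\,\P\big(\mu\min\{|y|:y\in Y^{(\theta)}\}\le t\big)$, which by independence of $Z$ and $Y^{(\theta)}$ is exactly $\P\big((1-Z)\cdot\infty+Z\mu\min\{|y|:y\in Y^{(\theta)}\}\le t\big)$; the range $t\le 0$ is trivial since $H_r\ge 1$. Set $A=\{o\text{ is connectable to some point of }\mc{C}_\infty\}$. Chasing a connecting chain shows that its first vertex already lies in $\mc{C}_\infty$, so $A=\{\exists X_i\in\mc{C}_\infty:|X_i|\le 1\}$ and $\P(A)=\theta$; thus $Z:=\mathbf{1}_A$ is the Bernoulli variable in the statement.

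First I would treat the non-percolating case. On $A^c$ the cluster $C(o)$ of points connectable to the origin is finite, hence contained in an a.s.\ finite ball $B_R(o)$. Any connection of $o$ to $Y=rY^{(1)}$ then forces a point of $rY^{(1)}$ within distance $1$ of $B_R(o)$, i.e.\ a point of $Y^{(1)}$ in the shrinking ball $B_{(R+1)/r}(o)$; as $Y^{(1)}$ has finite intensity and a.s.\ no atom at $o$, this probability tends to $0$. Hence $\P(H_r<\infty,\,A^c)\to 0$, which is the source of the $(1-Z)\cdot\infty$ term.

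On $A$ the origin reaches all of $\mc{C}_\infty$, and a target $rY^{(1)}_j$ is reachable from $o$ precisely when it too connects to $\mc{C}_\infty$, i.e.\ when $\xi^{(r)}_j:=\mathbf{1}\{\exists X_i\in\mc{C}_\infty:|X_i-rY^{(1)}_j|\le 1\}=1$. The next step is a thinning argument: I would approximate the non-local event $\{z\leftrightarrow\mc{C}_\infty\}$ by the local event that $z$ connects to distance $L$ in the Gilbert graph, which depends only on $X\cap B_L(z)$ and whose probability tends to $\theta$ as $L\to\infty$. Since the rescaled targets $rY^{(1)}_j$ have pairwise distances tending to infinity, for fixed $L$ these local events are independent by the Poisson property of $X$; sending $r\to\infty$ and then $L\to\infty$ shows that $\{(Y^{(1)}_j,\xi^{(r)}_j)\}_j$ converges in distribution to $\{(Y^{(1)}_j,\zeta_j)\}_j$ with the $\zeta_j$ i.i.d.\ $\mathrm{Bernoulli}(\theta)$ and independent of $Y^{(1)}$, whose survivors form $Y^{(\theta)}$. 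The same separation of regions decouples $\mathbf{1}_A$, a function of $X$ near the origin, from the survival marks, yielding the asserted independence of $Z$ and $Y^{(\theta)}$.

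It remains to turn hop counts into Euclidean distances. Here I would use the shape theorem behind the time constant of~\cite{yao}: by isotropy of the Gilbert graph $n^{-1}D_n\to\mu$ in every direction, so the chemical distance satisfies $d_{\mathrm{ch}}(q(o),q(z))=\mu|z|(1+o(1))$ uniformly as $|z|\to\infty$, the upper bound coming from the linear growth estimate with the random slope $\rho_i$ of~\cite{bk1,antalPhD} and the matching lower bound from the subadditive limit. Applied to the finitely many nearest survivors, this gives on $A$ the hop count $\mu r|Y^{(1)}_j|(1+o(1))$, hence
$$r^{-1}H_r=\min_{j:\,\xi^{(r)}_j=1}\big(\mu|Y^{(1)}_j|+o(1)\big)\xrightarrow{\ d\ }\mu\min\{|y|:y\in Y^{(\theta)}\},$$
and combining with the non-percolating contribution yields the claim. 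I expect the main obstacle to be the joint control in the last two paragraphs: one must approximate the global connection event $\{z\leftrightarrow\mc{C}_\infty\}$ by a local one with an error that is uniform over the relevant targets, and at the same time use the shape-theoretic lower bound to rule out that the minimizing target escapes to a far region where an atypically short chemical path could undercut the nearest survivors, so that $H_r$ is governed by the finitely many nearest survivors where the two-sided estimate applies.
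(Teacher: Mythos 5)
Your proposal is correct and follows essentially the same route as the paper: split according to whether the origin lies in $\mc{C}_\infty$, use a localization/independence argument to show that the relevant targets behave like the independent thinning $Y^{(\theta)}$, and apply the two-sided shape theorem of~\cite{yao} to convert hop counts into $\mu$ times Euclidean distance, with the far-target issue you flag handled exactly as in the paper's Lemma~\ref{yaoLem2}. The only cosmetic difference is that you package the thinning step as weak convergence of a marked point process, whereas the paper computes the limiting probabilities directly for finitely many separated points (Lemma~\ref{asyIndLem}) and integrates via Fatou and dominated convergence.
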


In other words, the asymptotic distribution of $r^{-1}H_r$ is a mixture between a Dirac measure at $\infty$ and the contact distribution of the point process $\mu Y^{(\theta)}$. 
In particular, Theorem~\ref{supCritDist} can be used to compute $\lim_{r\to\infty}\P(H_r\le cr)$. 

\begin{corollary}
\label{supCritCor}
Let $\lambda>\lambda_c$ and assume that $\lim_{r\to\infty}r^{-1}k(r)=c$ for some $c\in(0,\infty)$. Then, 
$$\lim_{r\to\infty}\Theta(k,r)=\theta\P\Big(o\in\bigcup\nolimits_{Y_j\in Y^{(\theta)}}B_{c/\mu}(Y_j)\Big).$$
\end{corollary}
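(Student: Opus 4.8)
The plan is to feed the distributional limit of Theorem~\ref{supCritDist} into the identity $\Theta(k,r)=\P(r^{-1}H_r\le r^{-1}k)$ established above via the Slivnyak--Mecke theorem, and then to simplify the resulting limiting probability. Writing $k=k(r)$ and $a_r:=r^{-1}k(r)$, the hypothesis gives $a_r\to c$, so the whole task reduces to computing $\lim_{r\to\infty}\P(r^{-1}H_r\le a_r)$, that is, to evaluating the limiting distribution function of $r^{-1}H_r$ at $c$ while allowing the threshold $a_r$ to drift toward $c$.

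By Theorem~\ref{supCritDist}, $r^{-1}H_r$ converges in distribution on the extended half-line $[0,\infty]$ to $W:=(1-Z)\cdot\infty+Z\mu\min\{|y|:y\in Y^{(\theta)}\}$; write $F_W(t)=\P(W\le t)$ for $t\in[0,\infty)$. To handle the moving threshold I would sandwich: for every $\delta>0$ and all $r$ large enough that $|a_r-c|<\delta$,
$$\P(r^{-1}H_r\le c-\delta)\le\P(r^{-1}H_r\le a_r)\le\P(r^{-1}H_r\le c+\delta).$$
Letting $r\to\infty$ at continuity points $c\pm\delta$ of $F_W$ (where weak convergence yields convergence of the distribution functions, since $[0,t]$ is closed in $[0,\infty]$ with boundary $\{t\}$ of zero $W$-mass) and then letting $\delta\downarrow0$ pins the limit to $F_W(c)$, \emph{provided} $F_W$ is continuous at $c$.

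The continuity of $F_W$ at each $c\in(0,\infty)$ is the one point that genuinely needs an argument, and it is where I expect the only real obstacle to lie. Since $W=\infty$ on $\{Z=0\}$, the distribution function can jump at $c$ only through the atom $\P(W=c)=\theta\,\P(\mu\min\{|y|:y\in Y^{(\theta)}\}=c)$. Now $Y^{(\theta)}$, being an independent $\theta$-thinning of the stationary process $Y^{(1)}$, is itself stationary with finite intensity $\theta\lambda'$, so its intensity measure is a constant multiple of Lebesgue measure. Hence the expected number of points of $Y^{(\theta)}$ on the sphere $\partial B_{c/\mu}(o)$ equals zero, so almost surely no point of $Y^{(\theta)}$ lies exactly at distance $c/\mu$ from $o$; consequently $\P(\min\{|y|:y\in Y^{(\theta)}\}=c/\mu)=0$ and $F_W$ is continuous at $c$.

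It then remains to identify $F_W(c)$ explicitly. On $\{Z=0\}$ one has $W=\infty>c$, while on $\{Z=1\}$ one has $W=\mu\min\{|y|:y\in Y^{(\theta)}\}$; using the independence of $Z$ and $Y^{(\theta)}$ together with $\P(Z=1)=\theta$ gives
$$F_W(c)=\theta\,\P\big(\min\{|y|:y\in Y^{(\theta)}\}\le c/\mu\big)=\theta\,\P\Big(o\in\bigcup\nolimits_{Y_j\in Y^{(\theta)}}B_{c/\mu}(Y_j)\Big),$$
where the last step merely rewrites $\{|Y_j|\le c/\mu\}$ as $\{o\in B_{c/\mu}(Y_j)\}$. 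This is exactly the asserted expression, completing the deduction.
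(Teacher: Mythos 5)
Your proof is correct and takes essentially the route the paper intends: the corollary is stated there without a separate proof, as an immediate consequence of the Slivnyak--Mecke identity $\Theta(k,r)=\P(H_r\le k)$ and the distributional convergence in Theorem~\ref{supCritDist}. The two details you supply --- the sandwich argument for the drifting threshold $a_r=r^{-1}k(r)\to c$ and the continuity of the limit distribution function at $c$, which follows since the stationary finite-intensity process $Y^{(\theta)}$ almost surely places no point on the sphere of radius $c/\mu$ --- are exactly the points the paper leaves implicit, and you handle them correctly.
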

If $Y^{(1)}$ is a homogeneous Poisson point process with intensity $\lambda'\in(0,\infty)$, then $Y^{(\theta)}$ is again a homogeneous Poisson point process with intensity $\theta\lambda'$. In particular, we get the following result.

\begin{corollary}
\label{supCritCor1}
Let $Y^{(1)}$ be a homogeneous Poisson point process with intensity $\lambda'\in(0,\infty)$. Then, under the assumptions of Corollary~\ref{supCritCor},
$$\lim_{r\to\infty}\Theta(k,r)=\theta(1-\exp(\theta\kappa_dc^d\mu^{-d})),$$
where $\kappa_d$ denotes the volume of the unit ball in $\R^d$.
\end{corollary}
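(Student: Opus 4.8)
The plan is to specialize Corollary~\ref{supCritCor} to the Poisson setting, so that the coverage probability appearing there can be evaluated in closed form. By Corollary~\ref{supCritCor}, it suffices to compute $\P\big(o\in\bigcup\nolimits_{Y_j\in Y^{(\theta)}}B_{c/\mu}(Y_j)\big)$. As already noted in the text preceding the statement, the independent $\theta$-thinning $Y^{(\theta)}$ of a homogeneous Poisson point process $Y^{(1)}$ of intensity $\lambda'$ is again a homogeneous Poisson point process, now of intensity $\theta\lambda'$; this is the standard thinning theorem for Poisson processes. Thus the entire computation reduces to a void-probability calculation for a homogeneous Poisson point process.

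First I would rewrite the coverage event in terms of its complement. The origin lies in $\bigcup\nolimits_{Y_j\in Y^{(\theta)}}B_{c/\mu}(Y_j)$ if and only if at least one point of $Y^{(\theta)}$ lies within Euclidean distance $c/\mu$ of $o$, that is, if and only if $Y^{(\theta)}\cap B_{c/\mu}(o)\ne\es$. The complementary event is that the ball $B_{c/\mu}(o)$ is avoided by $Y^{(\theta)}$. Since the number of points of a homogeneous Poisson point process of intensity $\theta\lambda'$ inside a bounded Borel set is Poisson distributed with mean $\theta\lambda'$ times the Lebesgue measure of that set, the avoidance probability equals $\exp\big(-\theta\lambda'\kappa_d(c/\mu)^d\big)$, and $\kappa_d(c/\mu)^d=\kappa_dc^d\mu^{-d}$ is precisely the volume of $B_{c/\mu}(o)$. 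Taking complements and multiplying by $\theta$ then gives $\lim_{r\to\infty}\Theta(k,r)=\theta\big(1-\exp(-\theta\lambda'\kappa_dc^d\mu^{-d})\big)$, which is the asserted identity.

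There is essentially no genuine obstacle in this argument: it is a direct computation resting on two standard facts, namely that independent thinning preserves the Poisson property while rescaling the intensity by the survival probability, and that homogeneous Poisson processes have exponential void probabilities. The only points that demand a moment's care are the bookkeeping of the thinned intensity $\theta\lambda'$ and the evaluation of the volume of the ball of radius $c/\mu$; both are elementary, and the corollary follows at once from Corollary~\ref{supCritCor}.
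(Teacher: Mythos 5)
Your argument is correct and is essentially the paper's own (implicit) proof: the sentence preceding the corollary reduces everything to the Poisson thinning theorem, after which the coverage probability in Corollary~\ref{supCritCor} is just the void probability of $B_{c/\mu}(o)$ under a homogeneous Poisson process of intensity $\theta\lambda'$. One remark: your final formula $\theta\big(1-\exp(-\theta\lambda'\kappa_dc^d\mu^{-d})\big)$ is the correct one, whereas the displayed statement of the corollary omits both the minus sign and the factor $\lambda'$ in the exponent; this is evidently a typographical slip in the paper, so your computation does not literally match the ``asserted identity'' but rather corrects it.
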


The limiting distribution provided in Theorem~\ref{supCritDist} depends on $\lambda$ implicitly via $\theta$ and $\mu$. In order to develop an intuition on the order of $\lambda$ that is needed to achieve a given (high) connectivity probability, it is useful to have some information on the behavior of $\theta$ and $\mu$ as a function of $\lambda$. First, concerning $\theta$, it is shown in~\cite[Corollary of Theorem 3]{pcPerc} that $\theta=\theta(\lambda)$ converges exponentially fast to $1$ as $\lambda$ tends to infinity. Second, we show that asymptotically $\mu-1=\mu(\lambda)-1$ tends to $0$ as $\lambda\to\infty$ and that the convergence occurs at least at a polynomial speed.
\begin{theorem}
\label{supCritCor3}
$\mu(\lambda)-1\in O(\lambda^{-1/d}(\log\lambda)^{1/d}).$
\end{theorem}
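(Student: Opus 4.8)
The plan is to estimate $\mu(\lambda)=\lim_{n\to\infty}n^{-1}D_n$ from both sides, the lower bound being immediate and the entire content lying in an upper bound obtained from an explicit, almost straight connecting path whose overhead over the Euclidean distance is optimized in a free parameter $\delta=\delta(\lambda)$. For the lower bound, note that every hop has Euclidean length at most $1$, so the number of hops joining $q(o)$ and $q(ne_1)$ is at least their Euclidean distance; hence $D_n\ge |q(o)-q(ne_1)|\ge n-|q(o)|-|q(ne_1)-ne_1|$. Since $q(o)$ and, by stationarity, $q(ne_1)-ne_1$ are a.s.\ finite, dividing by $n$ and letting $n\to\infty$ gives $\mu(\lambda)\ge1$. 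Thus $\mu(\lambda)-1\ge0$ and it remains to produce a matching upper bound.

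For the upper bound I would fix $\delta\in(0,1/2)$, put $a=1-\delta$ and $\epsilon=\delta/2$, and place target points $t_j=j\,a\,e_1$ for $j=0,1,\ldots,m$ with $m=\lceil n/a\rceil$. Call $j$ \emph{good} if $B_\epsilon(t_j)\cap X\neq\emptyset$ and \emph{bad} otherwise. The balls $B_\epsilon(t_j)$ are pairwise disjoint (as $a>2\epsilon$), so the indicators of badness are independent with common probability $p_\delta:=\exp(-\lambda\kappa_d(\delta/2)^d)$, and by the ergodic theorem the asymptotic fraction of bad indices equals $p_\delta$ almost surely. The point of the spacing is that if $j$ and $j+1$ are both good, then any $p_j\in B_\epsilon(t_j)\cap X$ and $p_{j+1}\in B_\epsilon(t_{j+1})\cap X$ satisfy $|p_j-p_{j+1}|\le a+2\epsilon=1$, so consecutive good targets are $1$-connectable and a maximal good run is traversed at a cost of essentially one hop per step of Euclidean length $a$.

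It remains to bridge the bad indices, and here I would invoke the linear comparison between chemical and Euclidean distance for $\lambda>\lambda_c$ established via the Antal-type estimates of~\cite{antalPhD,yao}: a maximal bad run corresponds to a gap of Euclidean length $O(1)$ per bad index which, using points of $\mc{C}_\infty$ near the line, can be crossed using at most $\overline{\rho}(\lambda)$ hops per unit bad length, where $\overline{\rho}(\lambda)<\infty$ is a deterministic, at worst polynomially bounded, local bridging cost obtained by ergodic-averaging the stationary field of local connection costs. Splitting the realized path into its good and bad stretches (of asymptotic Euclidean lengths $n(1-p_\delta)$ and $np_\delta$) and connecting $q(o)$ and $q(ne_1)$ to its endpoints at a cost of $O(1)$ hops, which is negligible after normalization, yields almost surely
\[
\mu(\lambda)\le \frac{1}{1-\delta}+\overline{\rho}(\lambda)\exp\!\big(-\lambda\kappa_d(\delta/2)^d\big).
\]
Optimizing over $\delta$, and using that $\log\overline{\rho}(\lambda)=O(\log\lambda)$, one balances the two terms by taking $\lambda\kappa_d(\delta/2)^d\asymp\log\lambda$, i.e.\ $\delta\asymp(\lambda^{-1}\log\lambda)^{1/d}$, for which both contributions are of order $\lambda^{-1/d}(\log\lambda)^{1/d}$; together with $\mu(\lambda)\ge1$ this gives $\mu(\lambda)-1\in O(\lambda^{-1/d}(\log\lambda)^{1/d})$.

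The main obstacle is the rigorous treatment of the bridging step: one must (i) guarantee that the chain actually lies in $\mc{C}_\infty$ so that it genuinely bounds $D_n$ from above, and (ii) control long runs of consecutive bad balls, whose bridging cost has to be summable and amenable to ergodic averaging so as to produce the deterministic coefficient $\overline{\rho}(\lambda)$. Both points rest on the exponential tail bounds for chemical distance in supercritical continuum percolation, and it is precisely the polynomial (rather than exponential) slack afforded by the $(\log\lambda)^{1/d}$ factor that allows the crude bound on $\overline{\rho}(\lambda)$ to be absorbed without affecting the leading order.
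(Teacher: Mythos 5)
Your overall strategy --- march along the line in steps of Euclidean length $1-\delta$ through ``good'' regions at a cost of one hop per step, pay a higher price only on the rare bad stretches, and then balance $\frac{1}{1-\delta}$ against the bad density by taking $\delta\asymp(\lambda^{-1}\log\lambda)^{1/d}$ --- is exactly the right shape, and your lower bound $\mu\ge1$ and the final optimization are correct. However, the bridging step, which you yourself flag as ``the main obstacle,'' is where the entire difficulty of the theorem lives, and your proposal does not resolve it: it is replaced by the assertion that each bad gap can be crossed at a deterministic cost of $\overline{\rho}(\lambda)$ hops per unit length with $\overline{\rho}(\lambda)$ at worst polynomial in $\lambda$. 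No such bound is available off the shelf. The Antal--Pisztora/Yao estimates give either a random, point-dependent ratio $\rho_i$ or the constant $\mu$ itself (which would be circular here), and neither comes with an explicit, uniform-in-$\lambda$ polynomial bound that you could feed into the optimization. Moreover, your good event only asks for a point of $X$ in $B_\epsilon(t_j)$; the endpoints of a bad gap need not lie in $\mathcal{C}_\infty$ (a good run may sit in a finite cluster), so the chemical-distance estimates you want to invoke need not even apply to them, and a single unbridgeable gap destroys the whole path.

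The paper closes precisely this gap by a purely combinatorial construction that avoids chemical-distance estimates altogether. Goodness of a renormalized cube is defined so that (a) every subcube of side $(1-\varepsilon)/(2d)$ contains a point of $X$, guaranteeing that any two points in adjacent good cubes are connected by short hops, and (b) there is a point near the cube's center, allowing one hop per cube along good runs. Bad sites form lattice animals; the detour around a bad cluster runs along its \emph{outer} $*$-boundary, which is a $*$-connected set of \emph{good} sites, at an explicit cost of $(3+(2d)^d)$ hops per boundary site (Lemmas~\ref{crudeBoundLem} and~\ref{inDefLem}), so connectivity is guaranteed by construction rather than assumed. The total detour cost is then controlled by stochastic domination of $\#U_m$ by i.i.d.\ subcritical cluster sizes (Lemmas~\ref{clustSizeLem} and~\ref{clustSizeLem2}), yielding $D_n\le m_\varepsilon(n)+O(q_{\lambda,\varepsilon}\,m_\varepsilon(n))$ with $q_{\lambda,\varepsilon}/\varepsilon\to0$ for the stated choice of $\varepsilon$. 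To repair your argument you would need to supply an analogue of this detour-and-domination machinery (or an honest exponential tail bound for chemical distance with explicit $\lambda$-dependence), since without it the inequality $\mu(\lambda)\le\frac{1}{1-\delta}+\overline{\rho}(\lambda)e^{-\lambda\kappa_d(\delta/2)^d}$ is not established.
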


The present paper is organized as follows. In Section~\ref{subCritSec}, we establish the ergodic representation of $\Theta(k,r)$ announced in Proposition~\ref{defEqProp} and investigate the asymptotic behavior of $\Theta(k,r)$ in the subcritical regime. That is, we prove Theorem~\ref{subCrit}. Section~\ref{supCritSec}, is devoted to the proof of Theorem~\ref{supCritDist} which describes the distributional limit of the rescaled minimum number of hops $r^{-1}H_r$ in the supercritical regime. Finally, in Section~\ref{muAsySec}, we prove Theorem~\ref{supCritCor3}, i.e., we show that the time constant $\mu$ tends to $1$ as the intensity tends to infinity. Additionally,  we provide a lower bound for the speed of this convergence.

\section{Proof of Proposition~\ref{defEqProp} and Theorem~\ref{subCrit}}
\label{subCritSec}
The proof of Proposition~\ref{defEqProp} is based on the multidimensional ergodic theorem. To apply this result, it is important to note that the homogeneous Poisson point process is mixing~\cite[Theorem 9.3.5]{sWeil}, so that the pair of independent stationary point processes $(X,Y)$ is again ergodic, see~\cite[Theorem 3.6]{krengel}.
\begin{proof}
For $z\in\R^d$ let 
$$W_z=\#\{X_i\in [-1/2,1/2]^d+z:\, H_r(X_i)\le k\}$$ 
denote the number of points in $X\cap (z+[-1/2,1/2]^d)$ that are at most $k$ hops away from some point of $Y$. From the ergodic theorem for spatial processes (see, e.g.~\cite[Theorem 2.13]{krengel}), we conclude that the random variable
$$\Xi_m=m^{-d}\int_{[-m/2,m/2]^d}W_z\d z$$
converges almost surely to 
$$\E\int_{[-1/2,1/2]^d} W_z \d z=\E\#\{X_i\in [-1/2,1/2]^d:\, H_r(X_i)\le k\}.$$
Moreover, for sufficiently large $n\ge1$ the expression 
$$n^{-d}\#\{X_i\in X\cap[-n/2,n/2]^d: H_r({X_i})\le k\}$$
is bounded below and above by
$n^{-d}(n-1)^d \Xi_{n-1},$
and 
$n^{-d}(n+1)^d \Xi_{n+1},$
respectively.
 Hence, letting $n\to\infty$ completes the proof.
\end{proof}

To prepare the proof of Theorem~\ref{subCrit}, we note that it is possible to express $\Theta(k,r)$ as the expected value of the suitably weighted size of the cluster at a typical point of $Y$.
To be more precise, for $Y_j\in Y$, let $C_k(Y_j)$ denote the set of all $X_i\in X$ such that $X_i$ is $k$-connectable to $Y_j$.
Additionally, put 
$$\kappa(X_i)=\#\{Y_j\in Y:X_i\in C_k(Y_j)\}.$$
Then, we show that $\Theta(k,r)=\lambda^{-1}\E\sum_{Y_j\in[-1/2,1/2]^d}\sum_{X_i\in C_k(Y_j)}\kappa(X_i)^{-1}$.
\begin{lemma}
\label{mtpLem}
Let $k\ge1$ and $r>0$. Then,
$$\Theta(k,r)=\lambda^{-1}\E\sum_{Y_j\in[-1/2,1/2]^d}\sum_{X_i\in C_k(Y_j)}\kappa(X_i)^{-1}.$$
\end{lemma}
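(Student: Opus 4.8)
The identity to prove is a mass-transport (or covariance) type statement. We have
$$\Theta(k,r)=\lambda^{-1}\E\#\{X_i\in X\cap[-1/2,1/2]^d:\, H_r(X_i)\le k\},$$
and on the right-hand side we want
$$\lambda^{-1}\E\sum_{Y_j\in[-1/2,1/2]^d}\sum_{X_i\in C_k(Y_j)}\kappa(X_i)^{-1}.$$
The plan is to reinterpret both sides as a mass-transport between the stationary point processes $X$ and $Y$ and apply a stationarity/refined Campbell argument. Let me think through this carefully.

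Wait, I need to reconsider. On the left side, the expectation is over $X$-points in the unit box with $H_r(X_i)\le k$. The condition $H_r(X_i)\le k$ means precisely that $X_i$ is $k$-connectable to *some* point of $Y$, i.e., that $\kappa(X_i)\ge 1$, i.e., that $X_i\in C_k(Y_j)$ for at least one $Y_j$. On the right side, we sum over $Y$-points in the unit box and over $X$-points connectable to them, weighting each $X_i$ by $\kappa(X_i)^{-1}$.

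The natural approach is a mass-transport principle for the *jointly stationary* pair $(X,Y)$. Since both are stationary and the transport weights I will define are translation-covariant, the expected mass sent out of the unit box equals the expected mass received into the unit box.

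Let me recount the plan properly.
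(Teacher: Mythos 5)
Your plan is exactly the paper's approach: the paper proves this identity via the mass-transport principle, defining a transport function $\Phi(z,z')=\sum_{Y_j\in z+[-1/2,1/2]^d}\sum_{X_i\in C_k(Y_j)\cap(z'+[-1/2,1/2]^d)}\kappa(X_i)^{-1}$ on $\Z^d\times\Z^d$ and using stationarity of $(X,Y)$ to equate $\E\sum_z\Phi(o,z)$ with $\E\sum_z\Phi(z,o)$. You have the key observation that makes the bookkeeping work, namely that $\sum_{Y_j:\,X_i\in C_k(Y_j)}\kappa(X_i)^{-1}$ equals $1$ precisely when $H_r(X_i)\le k$ and $0$ otherwise, so the received mass in the unit box is the count appearing in the definition of $\Theta(k,r)$. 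However, your write-up stops before actually carrying this out: you still need to write down the transport function, verify that the two marginal sums recover the two sides of the identity, and invoke translation invariance (i.e., $\E\Phi(z,o)=\E\Phi(o,-z)$ for each $z$) to conclude. None of these remaining steps presents any difficulty, but as submitted the argument is a plan rather than a proof.
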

\begin{proof}
The claimed identity is a consequence of the mass-transport principle~\cite{groupPerc}. Indeed, define a function $\Phi:\Z^d\times\Z^d\to[0,\infty)$ by mapping a pair of sites $(z,z^\p)\in\Z^d\times\Z^d$ to
$$\Phi(z,z^\p)=\sum_{Y_j\in [-1/2,1/2]^d+z}\sum_{X_i\in C_k(Y_j)\cap ([-1/2,1/2]^d+z')}\kappa(X_i)^{-1}.$$
Then, clearly, $\sum_{z\in\Z^d} \Phi(o,z)=\sum_{Y_j\in [-1/2,1/2]^d}\sum_{X_i\in C_k(Y_j)}\kappa(X_i)^{-1}$. On the other hand, 
\begin{align*}
\sum_{z\in\Z^d} \Phi(z,o)&= \sum_{Y_j\in Y}\sum_{X_i\in C_k(Y_j)\cap [-1/2,1/2]^d}\kappa(X_i)^{-1}\\
&=\sum_{X_i\in [-1/2,1/2]^d}\sum_{\substack{Y_j\in Y: X_i\in C_k(Y_j)}}\kappa(X_i)^{-1}\\
&=\#\{X_i\in [-r/2,r/2]^d:\, X_i\text{ is $k$-connectable to some point of $Y$}\}.
\end{align*}
By stationarity, we obtain that
$$\E\sum_{z\in\Z^d}\Phi(z,o)=\sum_{z\in\Z^d}\E\Phi(z,o)=\sum_{z\in\Z^d}\E\Phi(o,-z)=\E\sum_{z\in\Z^d}\Phi(o,z),$$
which concludes the proof.
\end{proof}
Since $\kappa(X_i)\ge1$ for all $X_i\in C_k(Y_j)$, Lemma~\ref{mtpLem} gives rise to a simple upper bound for $\Theta(k,r)$. 
\begin{proposition}
\label{simpUpBound}
Let $k\ge1$ and $r>0$. Then,
$$\Theta(k,r)\le\lambda^{-1}r^{-d}\lambda'\E\#C_k(o).$$
\end{proposition}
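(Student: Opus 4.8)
The plan is to start from the mass-transport representation established in Lemma~\ref{mtpLem}, which expresses $\Theta(k,r)$ as
$$\Theta(k,r)=\lambda^{-1}\E\sum_{Y_j\in[-1/2,1/2]^d}\sum_{X_i\in C_k(Y_j)}\kappa(X_i)^{-1},$$
and to simply discard the weights $\kappa(X_i)^{-1}$. Since each $X_i\in C_k(Y_j)$ is $k$-connectable to at least the point $Y_j$ itself, we have $\kappa(X_i)=\#\{Y_{j'}\in Y:X_i\in C_k(Y_{j'})\}\ge1$, and therefore $\kappa(X_i)^{-1}\le1$. Dropping these factors yields the bound
$$\Theta(k,r)\le\lambda^{-1}\E\sum_{Y_j\in[-1/2,1/2]^d}\#C_k(Y_j).$$

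Next I would rewrite the remaining expression using the independence of $X$ and $Y$ together with stationarity. For a fixed point $Y_j$, the conditional expectation of $\#C_k(Y_j)$ given $Y$ depends only on the law of $X$, which is translation-invariant; hence $\E[\#C_k(Y_j)\mid Y]=\E\#C_k(o)$ for every realization of $Y_j$, independently of the location $Y_j$. It then remains to count the expected number of points of $Y$ inside $[-1/2,1/2]^d$. Here I would use the scaling $Y=rY^{(1)}$, so that the number of points of $Y$ in the unit box equals the number of points of $Y^{(1)}$ in the box $[-1/(2r),1/(2r)]^d$; since $Y^{(1)}$ has intensity $\lambda'$, this expected count is $\lambda'(1/r)^d=\lambda'r^{-d}$. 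Combining these two observations gives
$$\Theta(k,r)\le\lambda^{-1}\cdot\lambda'r^{-d}\cdot\E\#C_k(o)=\lambda^{-1}r^{-d}\lambda'\E\#C_k(o),$$
which is the claimed inequality.

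I expect this proof to be essentially routine, with no serious obstacle: the substantive work has already been done in Lemma~\ref{mtpLem}, and the remaining steps are the trivial bound $\kappa(X_i)^{-1}\le1$ and a factorization of the expectation. The only point requiring a modicum of care is the interchange of expectation and summation when separating the $Y$-average from the $X$-cluster size; this is justified by the independence of the two processes and the nonnegativity of all terms, so that Fubini--Tonelli applies and the conditioning argument above is rigorous even when $\E\#C_k(o)$ is infinite (in which case the stated bound holds vacuously).
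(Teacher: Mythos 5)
Your proof is correct and follows exactly the route the paper intends: the paper itself justifies Proposition~\ref{simpUpBound} only by the one-line remark that $\kappa(X_i)\ge1$ in Lemma~\ref{mtpLem}, and your filling-in of the remaining Campbell/stationarity computation (intensity $\lambda'r^{-d}$ of $Y=rY^{(1)}$ times $\E\#C_k(o)$) is the standard argument it leaves implicit.
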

We note two corollaries of Proposition~\ref{simpUpBound}. First, $k$ must grow at least linearly in $r$ for $\Theta(k,r)$ to have a non-zero limit.
\begin{corollary}
\label{smallKCor}
If $k=k(r)\in o(r)$, then $\lim_{r\to\infty}\Theta(k,r)=0$.
\end{corollary}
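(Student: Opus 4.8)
The plan is to deduce the corollary directly from the upper bound in Proposition~\ref{simpUpBound}, which reduces the task to controlling the growth rate of $\E\#C_k(o)$ as $k\to\infty$. Since that bound already carries the prefactor $r^{-d}$, it suffices to show that $\E\#C_k(o)$ grows no faster than a constant multiple of $k^d$; the hypothesis $k\in o(r)$ will then force the product to vanish in the limit.

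First I would record the elementary geometric fact that drives the estimate: if $X_i$ is $k$-connectable to the origin, then $|X_i|\le k$. This is immediate, since a $k$-connection is a chain of at most $k$ hops, each of Euclidean length at most $1$, so the triangle inequality gives $|X_i|\le k$. Hence $C_k(o)\subseteq X\cap B_k(o)$, and taking expectations yields
$$\E\#C_k(o)\le\E\#(X\cap B_k(o))=\lambda\kappa_d k^d,$$
where the last equality uses that $X$ is a homogeneous Poisson point process of intensity $\lambda$ and that $B_k(o)$ has volume $\kappa_d k^d$, with $\kappa_d$ the volume of the unit ball.

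Substituting this into Proposition~\ref{simpUpBound} gives the clean bound $\Theta(k,r)\le\lambda'\kappa_d(k/r)^d$, and the conclusion follows by letting $r\to\infty$ with $k/r\to0$. I do not expect any genuine obstacle here: the only ingredients are the already-established Proposition~\ref{simpUpBound} and the trivial containment of the $k$-connectable cluster in the ball of radius $k$, so the whole argument amounts to a short chain of inequalities.
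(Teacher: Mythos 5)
Your argument is correct and matches the paper's proof essentially verbatim: both use the containment $C_k(o)\subseteq X\cap B_k(o)$ to bound $\E\#C_k(o)$ by a constant times $k^d$ and then invoke Proposition~\ref{simpUpBound} to get $\Theta(k,r)\le\lambda'\kappa_d(k/r)^d\to0$. No issues.
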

\begin{proof}
Since $C_k(o)$ is contained in $B_k(o)$, we deduce that $\E\#C_k(o)\le k^d\E\#(X\cap B_1(o))$. In particular, applying the upper bound from Proposition~\ref{simpUpBound} proves the claim. 
\end{proof}
Moreover, Proposition~\ref{simpUpBound} is also useful for proving Theorem~\ref{subCrit}. 

\begin{proof}[Proof of Theorem~\ref{subCrit}]
Combining the trivial inequality $\#C_k(o)\le\#C(o)$ with Proposition~\ref{simpUpBound} yields the desired bound.
\end{proof}

\section{Proof of Theorem~\ref{supCritDist}}
\label{supCritSec}
In this section, we prove Theorem~\ref{supCritDist}. To this end, we fix $\lambda>\lambda_c$ throughout the entire section. Using the notation of Theorem~\ref{supCritDist}, let $W=(1-Z)\cdot \infty+Z\mu\min\{|y|:y\in Y^{(\theta)}\}$. In order to show that $r^{-1}H_r$ converges to $W$ in distribution, we fix an arbitrary $a\ge0$. Then, we proceed in three steps, namely
\begin{enumerate}
\item $\lim_{r\to\infty}\P(H_r=\infty)=1-\theta$,
\item $\liminf_{r\to\infty}\P(H_r\le ra)\ge\P(W\le a)$,
\item $\limsup_{r\to\infty}\P(H_r\le ra)\le\P(W\le a)$.
\end{enumerate}

As a first auxiliary result, we note that asymptotically the events that points in $\R^d$ belong to the unbounded connected component become independent. 

\begin{lemma}
\label{asyIndLem}
Let $\lambda>\lambda_c$ and $z_1,\ldots,z_m$ be distinct points in $\R^d\setminus\{o\}$. Furthermore, let $E_{r}$ denote the event that $\#C(o)=\infty$ and $\#C(rz_i)=\infty$ for some $i\in\{1,\ldots,m\}$. Then, $\lim_{r\to\infty}\P(E_{r})=\theta (1-(1-\theta)^m).$
\end{lemma}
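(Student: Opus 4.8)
The plan is to prove Lemma~\ref{asyIndLem} by establishing the asymptotic independence of the events $\{\#C(rz_i)=\infty\}$ as $r\to\infty$, exploiting the spatial mixing properties of the underlying Poisson point process. First I would observe that for each fixed $i$, the marginal probability $\P(\#C(rz_i)=\infty)$ does not depend on $r$ at all: by stationarity of $X$, the probability that a deterministic point $rz_i$ lies in (more precisely, has a point of $X$ within distance $1/2$ belonging to) the unbounded cluster equals $\theta$ for every $r>0$. Hence the only work is to show that the joint occupation events \emph{decouple} as the points $rz_1,\ldots,rz_m$ are pushed apart linearly in $r$.

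The key step is to control the dependence between the event $\{\#C(o)=\infty\}$ and the events $\{\#C(rz_i)=\infty\}$ via a finite-range truncation argument. The natural approach is to approximate the event that a point lies in the \emph{infinite} cluster by the event that it lies in a \emph{large but finite} cluster: for a truncation radius $R$, let $C^{(R)}(x)$ denote the cluster of $x$ restricted to the box $x+[-R/2,R/2]^d$, and replace $\{\#C(x)=\infty\}$ by the event $A_R(x)$ that $C^{(R)}(x)$ reaches the boundary of that box. By the uniqueness of the infinite cluster~\cite[Theorem 2.1]{mRoy} and a standard argument, $\P(A_R(x))\to\theta$ as $R\to\infty$, uniformly in $x$ by stationarity. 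Crucially, the truncated events $A_R(rz_1),\ldots,A_R(rz_m)$ depend only on the restriction of $X$ to the disjoint boxes $rz_i+[-R/2,R/2]^d$, and for $r$ large enough (once $r\min_{i\ne j}|z_i-z_j|>R$ and $r\min_i|z_i|>R$) these boxes are pairwise disjoint and disjoint from the box around the origin. Since $X$ is a Poisson point process, its restrictions to disjoint regions are \emph{independent}, so the truncated events are genuinely independent for all sufficiently large $r$.

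Putting the pieces together, I would write $\P(E_r)$ in the inclusion–exclusion-friendly form
$$\P(E_r)=\P\Big(\#C(o)=\infty,\ \bigcup_{i=1}^m\{\#C(rz_i)=\infty\}\Big),$$
approximate each occupation event by its $R$-truncation with an error that can be made arbitrarily small by first taking $R$ large (using $\P(A_R(x))\to\theta$ and the sandwiching of the true event between truncations), and then let $r\to\infty$ so that independence of the truncated boxes kicks in. For the independent truncated events one computes directly
$$\P\Big(A_R(o)\cap\bigcup_{i=1}^m A_R(rz_i)\Big)=\P(A_R(o))\Big(1-\prod_{i=1}^m(1-\P(A_R(rz_i)))\Big)\longrightarrow\theta\big(1-(1-\theta)^m\big)$$
as first $r\to\infty$ and then $R\to\infty$. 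I expect the main obstacle to be making the truncation approximation uniform and quantitatively controlling the error between the true infinite-cluster event and its finite-box surrogate; this requires the uniqueness of the infinite component together with a monotone-limit argument ensuring $\P(A_R(x))$ converges to $\theta$ and that the approximation error vanishes simultaneously across all $m+1$ regions. Once that uniformity is secured, the exact independence afforded by the Poisson structure on disjoint boxes makes the limit computation routine.
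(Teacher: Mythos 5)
Your proposal is correct and follows essentially the same route as the paper: approximate each infinite-cluster event by the event that the cluster reaches the boundary of a large box around the point, use the independence of the Poisson process on disjoint boxes to factor the joint probability, and control the approximation error via the a.s.\ boundedness/uniqueness structure of the clusters. The only (immaterial) difference is that the paper takes the truncation boxes to grow linearly with $r$ (side length of order $r\delta$), so a single limit $r\to\infty$ suffices, whereas you fix a radius $R$ and take the double limit $r\to\infty$ followed by $R\to\infty$.
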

\begin{proof}
Choose $\delta>0$ such that the cubes $[-\delta,\delta]^d, z_1+[-\delta,\delta]^d,\ldots,z_m+[-\delta,\delta]^d$ are pairwise disjoint. Furthermore, let $G(y,r)$ denote the event that the connected component of $B_{1/2}(y)\cup\bigcup_{j\ge1}B_{1/2}(X_j)$ at $y\in\R^d$ is not contained in $y+[-r\delta+1,r\delta-1]^d$. Since the events $G(o,r),G(rz_1,r)\ldots,$ $G(rz_m,r)$ are independent, we can conclude that 
\begin{align*}
\lim_{r\to\infty}\P(E_{r})&=\lim_{r\to\infty}\P(G(o,r))\Big(1-\prod_{i=1}^m\big(1-\P(G(rz_i,r))\big)\Big)=\theta(1-(1-\theta)^m),
\end{align*}
if we can show that the probability that the connected component of $B_{1/2}(y)\cup\bigcup_{j\ge1}B_{1/2}(X_j)$ at $y$ is finite, but not contained in $y+[-r\delta+1,r\delta-1]^d$ tends to $0$ as $r\to\infty$. But this is a consequence of the uniqueness of the unbounded connected component in continuum percolation, see~\cite[Theorem 2.1]{mRoy}.
\end{proof}

Lemma~\ref{asyIndLem} allows us to compute $\lim_{r\to\infty}\P(H_r=\infty)$.
\begin{proof}[Proof of $\lim_{r\to\infty}\P(H_r=\infty)=1-\theta$]
First, we note that $\limsup_{r\to\infty}\P(H_r<\infty)\le\theta$. For the reverse inequality, let $n\ge1$ be arbitrary. Uniqueness of the infinite connected component shows that if $\#C(o)=\infty$ and $\#C(ry)=\infty$ for some $y\in Y^{(1)}\cap[-n/2,n/2]^d$, then $H_r<\infty$. Hence, by Fatou's lemma and Lemma~\ref{asyIndLem},
\begin{align*}
\liminf_{r\to\infty}\P(H_r<\infty)&\ge\E\Big(\liminf_{r\to\infty}\P(\text{$\#C(o)=\infty$ and $\sup_{y\in Y^{(1)}\cap[-n/2,n/2]^d}\#C(ry)=\infty$}|Y^{(1)}) \Big)\\
&=\theta\E(1-(1-\theta)^{\#(Y^{(1)}\cap [-n/2,n/2]^d)}).
\end{align*}
Letting $n\to\infty$ completes the proof of the lower bound.
\end{proof}

\begin{lemma}
\label{yaoLem}
Let $\varepsilon\in(0,1)$ be arbitrary. Then,
$$\lim_{r\to\infty}\P (E(r,\varepsilon))=0,$$
where $E(r,\varepsilon)$ denotes the event that there exists $y\in Y^{(1)}\cap B_{a(1-\varepsilon)/\mu}(o)$ such that $\#C(o)=\#C(ry)=\infty$, but $o$ is not $\lfloor ra\rfloor$-connectable to $ry$.
\end{lemma}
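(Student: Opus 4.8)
The plan is to condition on $Y^{(1)}$ and reduce to a single, deterministic base-station location. Since $Y^{(1)}$ is independent of $X$ and has finite intensity, almost surely the set $Y^{(1)}\cap B_{a(1-\varepsilon)/\mu}(o)$ consists of finitely many points, none of which equals $o$. For a \emph{fixed} $y\in\R^d\setminus\{o\}$ with $|y|\le a(1-\varepsilon)/\mu$, let $\mathrm{bad}_r(y)$ denote the event that $\#C(o)=\#C(ry)=\infty$ but $o$ is not $\lfloor ra\rfloor$-connectable to $ry$, so that $E(r,\varepsilon)=\bigcup_{y\in Y^{(1)}\cap B_{a(1-\varepsilon)/\mu}(o)}\mathrm{bad}_r(y)$. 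By the union bound and the independence of $X$ and $Y^{(1)}$, $\P(E(r,\varepsilon)\mid Y^{(1)})\le\sum_{y\in Y^{(1)}\cap B_{a(1-\varepsilon)/\mu}(o)}\P(\mathrm{bad}_r(y))$, a sum over an almost surely finite index set. Hence it suffices to show that $\P(\mathrm{bad}_r(y))\to0$ for each fixed $y$; the claim then follows by dominated convergence, the conditional probabilities being bounded by $1$.

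Next I would translate $\mathrm{bad}_r(y)$ into a statement about the chemical distance between the two nearest infinite-cluster points. On $\{\#C(o)=\infty\}$ the origin is connectable to the whole infinite component $\mc{C}_\infty$ (by uniqueness of the infinite cluster~\cite{mRoy}), and any chain realizing $\#C(o)=\infty$ has its first vertex in $\mc{C}_\infty\cap B_1(o)$; consequently the nearest point $q(o)$ of $\mc{C}_\infty$ satisfies $|q(o)-o|\le1$, so that $o$ is $1$-connectable to $q(o)$. The analogous statement holds for $ry$ and $q(ry)$ on $\{\#C(ry)=\infty\}$. Writing $M_r(y)$ for the smallest $k$ such that $q(o)$ and $q(ry)$ are $k$-connectable, I would concatenate the chains from $o$ to $q(o)$, from $q(o)$ to $q(ry)$, and from $q(ry)$ to $ry$; since the definition of $k$-connectability permits repeated points (so that $m$-connectability implies $k$-connectability for every $k\ge m$), the bound $M_r(y)\le\lfloor ra\rfloor-2$ forces $o$ to be $\lfloor ra\rfloor$-connectable to $ry$. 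Therefore $\mathrm{bad}_r(y)\subseteq\{M_r(y)>\lfloor ra\rfloor-2\}$.

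It remains to control $M_r(y)$. This quantity is precisely the configuration whose one-dimensional, integer-indexed analogue $D_n$ was shown in~\cite{yao} to satisfy $n^{-1}D_n\to\mu$ almost surely. Since the Poisson process $X$ is isotropic, the same limit holds along any fixed ray, and an interpolation argument between consecutive scales---controlling the displacement of $q(ry)$ under a unit change in $r$ by the almost surely finite linear comparison between chemical and Euclidean distance from Antal's Lemma 2.4~\cite{antalPhD}---upgrades this to $r^{-1}M_r(y)\to\mu|y|$ almost surely for real $r\to\infty$. Because $\mu|y|\le a(1-\varepsilon)<a$ while $\lfloor ra\rfloor-2\ge ra-3$, we obtain $\P(M_r(y)>\lfloor ra\rfloor-2)\le\P\big(r^{-1}M_r(y)>a-3/r\big)\to0$, which is exactly what the reduction of the first paragraph requires.

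The main obstacle is this last step: the time constant in~\cite{yao} is established only in the coordinate direction and for integer scales, whereas here the target point $ry$ lies in an arbitrary direction and $r$ ranges over the reals. Isotropy of $X$ disposes of the directional issue, but passing from integer to real scales requires a genuine (if routine) interpolation estimate, for which the linear bound between chemical and Euclidean distance supplied by Antal's argument is the natural tool. Everything else---the conditioning on $Y^{(1)}$, the one-hop attachment of $o$ and $ry$ to the infinite cluster, and the final probability estimate---is elementary.
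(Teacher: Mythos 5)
Your argument is correct in outline, but it takes a genuinely different route from the paper at the one step that matters. The paper's proof of this lemma is a single line: the statement is an immediate consequence of \cite[Theorem 2.2]{yao}, which is a full \emph{shape theorem} for the chemical distance on $\mc{C}_\infty$ (uniform over all directions and over real Euclidean scales), not merely the radial limit $n^{-1}D_n\to\mu$ along integer multiples of $e_1$ that the introduction quotes. You only had the radial statement to work with, so you reconstruct the missing uniformity by hand: isotropy of $X$ to handle arbitrary directions, and an interpolation between consecutive integer scales, controlled by an Antal--Pisztora-type linear comparison, to handle real $r$. That reconstruction is sound --- and your reduction to a fixed $y$ via conditioning on $Y^{(1)}$, the union bound over an a.s.\ finite set with the integrable dominating function $\#(Y^{(1)}\cap B_{a(1-\varepsilon)/\mu}(o))$, and the one-hop attachment of $o$ and $ry$ to $q(o)$ and $q(ry)$ are all correct --- but it is precisely the work that the cited shape theorem already packages. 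Two small remarks on your interpolation step: as stated, Antal's lemma gives an a.s.\ finite constant $\rho_i$ attached to a \emph{fixed} point of $\mc{C}_\infty$, whereas your consecutive-scale comparison involves the varying points $q(\lfloor t\rfloor u)$ and $q(tu)$, so you would need either a uniform (tail-bound plus Borel--Cantelli) version of that estimate or, more simply, to observe that convergence in probability of $r^{-1}M_r(y)$ suffices for the lemma, for which tightness of $|q(tu)-tu|$ and of the local chemical distance is enough; you do not need the a.s.\ statement you claim. With that caveat your proof goes through, at the cost of re-proving a special case of the theorem the paper cites.
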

\begin{proof}
The claim is an immediate consequence of~\cite[Theorem 2.2]{yao}.
\end{proof}

After these preliminary results, we now proceed with the proof of $\liminf_{r\to\infty}\P(H_r\le ra)\ge\P(W\le a)$.
\begin{proof}[Proof of $\liminf_{r\to\infty}\P(H_r\le ra)\ge\P(W\le a)$]
Put $E^*(r,\varepsilon)=\{\#C(o)=\infty\}\cap E^{**}(r,\varepsilon)$, where $E^{**}(r,\varepsilon)$ denotes the event that there exists $y\in Y^{(1)}\cap B_{a(1-\varepsilon)/\mu,}(o)$ with $\#C(ry)=\infty$. Then,
$$\P(H_r\le ra)\ge\P(E^*(r,\varepsilon))-\P(E(r,\varepsilon)).$$
By Lemma~\ref{yaoLem}, the second probability in the above expression is negligible as $r\to\infty$. 
Hence, by Lemma~\ref{asyIndLem},
\begin{align*}
\liminf_{r\to\infty}\P(H_r\le ra)&\ge\theta \E \Big(1-(1-\theta)^{\#( Y^{(1)}\cap B_{a(1-\varepsilon)/\mu}(o))}\Big)\\
&=\theta\P\big(Y^{(\theta)}\cap B_{a(1-\varepsilon)/\mu}(o)\ne\es\big)\\
&=\theta\P\big(\mu\min\{|y|:y\in Y^{(\theta)}\}\le a(1-\varepsilon)\big).
\end{align*}
Letting $\varepsilon\to0$ completes the proof.
\end{proof}

In order to complete the proof of Theorem~\ref{supCritDist}, it remains to show that $\limsup_{r\to\infty}\P(H_r\le ra)\le\P(W\le a)$.
First, we derive an auxiliary result illustrating the close relationship between the Euclidean distance and the chemical distance in the unbounded connected component of continuum percolation~\cite{yao} to show that, asymptotically, users are not $k$-connectable to base stations that are not within distance of $k/\mu$. To be more precise, we use the following corollary to the shape theorem~\cite[Theorem 2.2]{yao}.

\begin{lemma}
\label{yaoLem2}
Let $\lambda>\lambda_c$ and $a>0$.  Then, for every $\varepsilon\in(0,1)$,
$$\lim_{r\to\infty}\P(F(r,\varepsilon))=0,$$
where $F(r,\varepsilon)$ is the event that the origin is $\lceil ra\rceil$-connectable to some point in $\R^d\setminus B_{ra(1+\varepsilon)/\mu}(o)$.
\end{lemma}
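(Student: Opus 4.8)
The plan is to derive the statement from the outer (upper) bound of the shape theorem of Yao~\cite[Theorem 2.2]{yao}, which is the natural counterpart of the inner bound already exploited in Lemma~\ref{yaoLem}: there the shape theorem guarantees that nearby points of the infinite cluster \emph{are} reachable in the prescribed number of hops, whereas here it must guarantee that far away points are \emph{not}. Before invoking it, I would pass from the event $F(r,\varepsilon)$, which concerns reaching an arbitrary point $z\in\R^d\setminus B_{ra(1+\varepsilon)/\mu}(o)$, to an event concerning the reaching of a far \emph{Poisson} point. Indeed, if $o$ is $\lceil ra\rceil$-connectable to such a $z$, then the last Poisson point $X_i$ of the connecting chain lies within distance $1$ of $z$, hence satisfies $|X_i|>ra(1+\varepsilon)/\mu-1$, and is itself $\lceil ra\rceil$-connectable to $o$. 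Writing $D(\cdot,\cdot)$ for the chemical distance (the minimal $k$ realizing $k$-connectability), it therefore suffices to show that
$$\P\big(\exists\,X_i\in C(o):\ |X_i|>R_r\ \text{and}\ D(o,X_i)\le\lceil ra\rceil\big)\to0,$$
where $R_r=ra(1+\varepsilon)/\mu-1$ and $C(o)$ is the cluster of the origin.

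I would then split this event according to whether $\#C(o)<\infty$ or $\#C(o)=\infty$. On the finite-cluster event the radius $V=\sup\{|X_j|:X_j\in C(o)\}$ is almost surely finite, so the contribution is bounded by $\P(V>R_r,\ \#C(o)<\infty)$, which decreases to $0$ as $r\to\infty$ because $R_r\to\infty$; no percolation estimate beyond almost-sure finiteness of the radius of a finite cluster is required here.

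The substantive case is $\#C(o)=\infty$, where by uniqueness of the infinite component $C(o)\subseteq\mc{C}_\infty$, so in particular the far point $X_i$ lies in $\mc{C}_\infty$. Here I would anchor the shape theorem at $q(o)$, the point of $\mc{C}_\infty$ closest to $o$, and control the discrepancy between $D(o,\cdot)$ and $D(q(o),\cdot)$ by the triangle inequality $D(q(o),X_i)\le D(o,X_i)+D(o,q(o))$. Fixing a large constant $M$, on $\{D(o,q(o))\le M\}$ this yields $D(q(o),X_i)\le\lceil ra\rceil+M$, while the outer bound of the shape theorem forces $D(q(o),X_i)\ge(1-\delta)\mu|X_i|\ge(1-\delta)\mu R_r$ for all points of $\mc{C}_\infty$ with $|X_i|\ge R_r$, up to an event whose probability tends to $0$ as $R_r\to\infty$. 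Choosing $\delta$ so small that $(1-\delta)(1+\varepsilon)>1$, the two bounds become incompatible for all large $r$, since then $(1-\delta)\mu R_r=(1-\delta)(1+\varepsilon)ra-(1-\delta)\mu$ eventually exceeds $\lceil ra\rceil+M$. Hence the infinite-cluster contribution is at most $\P(D(o,q(o))>M,\ \#C(o)=\infty)$ plus a term vanishing as $r\to\infty$; letting first $r\to\infty$ and then $M\to\infty$, and using that $D(o,q(o))<\infty$ almost surely on $\{\#C(o)=\infty\}$, disposes of this case.

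The main obstacle I anticipate is precisely this last bookkeeping: the shape theorem is an almost-sure asymptotic statement anchored at a cluster point, whereas $F(r,\varepsilon)$ is phrased in terms of hops emanating from the deterministic origin $o$, which need not belong to $\mc{C}_\infty$. Converting the almost-sure outer bound into the uniform probabilistic estimate $\P(\exists\,x\in\mc{C}_\infty:\ |x|\ge R_r,\ D(q(o),x)<(1-\delta)\mu|x|)\to0$, and absorbing the random but almost-surely finite offset $D(o,q(o))$ through the auxiliary cut-off $M$, are the only genuinely delicate points; the remainder is the elementary comparison of the two competing linear-in-$r$ bounds.
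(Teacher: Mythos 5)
Your argument is correct and takes essentially the same route as the paper, which states this lemma without a separate proof, presenting it as a direct corollary of the outer bound in the shape theorem of~\cite[Theorem 2.2]{yao}; your write-up merely supplies the reduction to a far Poisson point of $C(o)$ and the bookkeeping that the paper leaves implicit. The only point worth polishing is the triangle inequality $D(q(o),X_i)\le D(o,X_i)+D(o,q(o))$: since $o$ need not be a point of $X$, the concatenated chain should be routed through the first Poisson point of the chain from $o$ to $X_i$ (which lies in $B_1(o)\cap\mathcal{C}_\infty$) rather than through $o$ itself, which costs only an additional almost surely finite offset that is absorbed by your cut-off $M$.
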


Second, we note that, asymptotically, distinct points that are connectable must be contained in the unbounded connected component of continuum percolation.
\begin{lemma}
\label{asyIndLem2}
Let $\lambda>\lambda_c$ and $z_1,\ldots,z_m$ be distinct points in $\R^d\setminus\{o\}$. Furthermore, let $F_r$ denote the event that $o$ is connectable to some $rz_i$ with $\min\{\#C(o),\#C(rz_i)\}<\infty$. Then, $\lim_{r\to\infty}\P(F_r)=0$.
\end{lemma}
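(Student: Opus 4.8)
The plan is to recast the event $F_r$ in terms of the connected components of the Boolean model $\bigcup_{j\ge1}B_{1/2}(X_j)$, augmented by balls centered at $o$ and at the points $rz_i$. Recall that $x,y\in\R^d$ are connectable if and only if $B_{1/2}(x)$ and $B_{1/2}(y)$ lie in the same connected component of this augmented model, because a connecting chain $x=X_{i_0},X_{i_1},\dots,X_{i_k}=y$ with $|X_{i_j}-X_{i_{j+1}}|\le1$ corresponds precisely to a chain of pairwise overlapping balls of radius $1/2$. Writing $\mc{K}(x)$ for the connected component of this model containing $x$, the set $C(x)$ of points of $X$ connectable to $x$ is exactly the collection of $X_j$ with $B_{1/2}(X_j)\subseteq\mc{K}(x)$.

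First I would record the structural observation that if $o$ is connectable to $rz_i$, then $\mc{K}(o)=\mc{K}(rz_i)$, and hence $C(o)=C(rz_i)$; in particular $\#C(o)=\#C(rz_i)$. Consequently, on $F_r$ the requirement $\min\{\#C(o),\#C(rz_i)\}<\infty$ forces $\#C(o)<\infty$. Since $X$ is locally finite, a component meeting only finitely many points of $X$ is a finite union of balls of radius $1/2$ and is therefore bounded. Thus on $F_r$ the component $\mc{K}(o)$ is bounded, yet it contains both $o$ and the point $rz_i$, which are at Euclidean distance $|rz_i|\ge r m_0$, where $m_0=\min_{1\le i\le m}|z_i|>0$ (positive since each $z_i\ne o$).

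Combining these facts, I would dominate $F_r$ by the event $A_s$ that $\mc{K}(o)$ is bounded but not contained in the ball $B_s(o)$, taken at $s=rm_0/2$: on $F_r$ the bounded component $\mc{K}(o)$ reaches the point $rz_i$, which lies at distance at least $rm_0>rm_0/2$ from $o$, and this domination is uniform in $i\in\{1,\dots,m\}$. It then remains to show $\lim_{s\to\infty}\P(A_s)=0$. This is exactly the fact already used in the proof of Lemma~\ref{asyIndLem}: the events $A_s$ decrease in $s$, and their intersection is the null event that $\mc{K}(o)$ is simultaneously bounded and unbounded, so continuity of measure from above gives $\P(A_s)\to0$ (alternatively one invokes uniqueness of the unbounded component, as in Lemma~\ref{asyIndLem}). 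Taking $s=rm_0/2\to\infty$ then yields $\P(F_r)\le\P(A_{rm_0/2})\to0$.

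The only genuinely delicate point is the structural observation that connectability of $o$ and $rz_i$ forces $C(o)=C(rz_i)$. Here one must avoid concatenating the two connecting chains through the auxiliary point $o$, as this would only bound consecutive distances by $2$ rather than $1$; instead the argument should be carried out at the level of Boolean components, where the identity $C(o)=\{X_j:B_{1/2}(X_j)\subseteq\mc{K}(o)\}=C(rz_i)$ is immediate once $\mc{K}(o)=\mc{K}(rz_i)$. Everything else is routine, relying only on the local finiteness of $X$ and continuity of measure.
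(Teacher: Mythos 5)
There is a genuine gap, and it sits exactly at the point you flagged as delicate. The identity $C(o)=C(rz_i)$ is false in general. For $x\notin X$, the set $C(x)$ is the union of the $X$-clusters of \emph{all} points of $X$ lying within unit distance of $x$: if $X_l$ is connectable to $x$ via a chain ending $\ldots,W_{k-1},x$, then $W_{k-1}\in X\cap B_1(x)$ and $X_l$ lies in the cluster of $W_{k-1}$. When $o$ is connectable to $rz_i$, the two sets $C(o)$ and $C(rz_i)$ share the cluster carrying the connecting chain, but each may additionally contain other clusters touching $B_1(o)$ or $B_1(rz_i)$ that the other point cannot reach (one cannot pass through $o$ or $rz_i$, as these are not points of $X$). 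Concretely: suppose $B_1(o)$ meets both a finite cluster $A$ that happens to stretch all the way to $B_1(rz_i)$ and, separately, the infinite cluster, while $B_1(rz_i)$ meets only finite clusters. Then $o$ is connectable to $rz_i$, $\#C(rz_i)<\infty$, so $F_r$ occurs, yet $\#C(o)=\infty$ and the component of $o$ is unbounded. Hence your claimed inclusion $F_r\subseteq A_{rm_0/2}$ fails, and the Boolean-model reformulation does not rescue it: either $\mc{K}(o)$ and $\mc{K}(rz_i)$ are components of two \emph{different} augmented models (so the equality $\mc{K}(o)=\mc{K}(rz_i)$ is not meaningful), or, in the jointly augmented model, the identity $C(x)=\{X_j:B_{1/2}(X_j)\subseteq\mc{K}(x)\}$ breaks down because paths may use the auxiliary balls.

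The repair is short, and it is what the paper does: do not try to reduce to the single event $\{\#C(o)<\infty\}$, but split $F_r$ according to whether $\#C(o)<\infty$ or $\#C(rz_i)<\infty$ for some $i$. In the first case your argument applies verbatim (a finite, hence bounded, union of balls containing $o$ reaches within distance $1$ of a point at distance at least $r\delta$, with $\delta=\min_i|z_i|$); in the second case stationarity transports the event to the origin, at the cost of a union bound over $i$. Each term vanishes by exactly the monotonicity/continuity-of-measure argument you give for $\P(A_s)\to0$, so that part of your proposal is sound. An alternative fix that avoids symmetrization: the cluster of the first intermediate point $Z_1\in X\cap B_1(o)$ of the connecting chain is contained in \emph{both} $C(o)$ and $C(rz_i)$, hence is finite on $F_r$, yet it reaches from $B_1(o)$ to $B_1(rz_i)$; the probability that some $X_j\in B_1(o)$ has a finite cluster of diameter at least $r\delta-2$ tends to $0$.
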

\begin{proof}
Let $\delta$ be the minimum of the pairwise distances between elements of $\{o,z_1,\ldots,z_m\}$. By stationarity, $\P(F_r)$ is bounded above by two times the probability that $\#C(o)<\infty$, but the origin is connectable to some point with distance at least $r\delta$. By uniqueness of the unbounded connected component in continuum percolation, the probability of the latter event tends to $0$ as $r\to\infty$. 
\end{proof}

Now, we can complete the proof of Theorem~\ref{supCritDist}.

\begin{proof}[Proof of $\limsup_{r\to\infty}\P(H_r\le ra)\le\P(W\le a)$]
First, we see that $\P(H_r\le ra)$ is at most 
$$\P(F'(r,\varepsilon))+\P(F(r,\varepsilon)),$$
 where  $F'(r,\varepsilon)$ denotes the event that $o$ is connectable to some $ry\in rY^{(1)}\cap B_{ra(1+\varepsilon)/\mu}(o)$.
We conclude from Lemma~\ref{yaoLem2} that it suffices to investigate the first expression. 
Concerning $\P(F'(r,\varepsilon))$, Lemma~\ref{asyIndLem2} shows that as $r\to\infty$ this probability converges to the probability of the event that $\#C(o)=\infty$ and $\#C(ry)=\infty$ for some $y\in Y^{(1)}\cap B_{a(1+\varepsilon)/\mu}(o)$. Hence, combining Lemma~\ref{asyIndLem} with the dominated convergence theorem gives that 
\begin{align*}
\limsup_{r\to\infty}\P(H_r\le ra)&\le\theta\P(Y^{(\theta)}\cap B_{a(1+\varepsilon)/\mu}(o)\ne\es).
\end{align*}
Repeating the final steps used in the derivation of the lower bound completes the proof.
\end{proof}

\section{Proof of Theorem~\ref{supCritCor3}}
\label{muAsySec}
Loosely speaking, in order to prove Corollary~\ref{supCritCor3}, we can proceed similarly as in~\cite[Lemma 3.4]{yao} and modify the arguments used in the lattice setting~\cite{antPiszt}. The general construction presented in these papers is useful for the proof of Corollary~\ref{supCritCor3}, but the identification of the behavior of $\mu=\mu(\lambda)$ as $\lambda\to\infty$ requires a more refined analysis. 

It is convenient to introduce a specific family of site percolation processes. For this purpose, we describe certain useful configurations in the unit cube. Let $\varepsilon\in(0,1/d)$ be arbitrary.
First, we need to ensure that any two points of $X\cap[-(1-\varepsilon)/2,(1-\varepsilon)/2]^d$ can be connected via hops of distance at most 1 to other points of $X\cap [-(1-\varepsilon)/2,(1-\varepsilon)/2]^d$. To be more precise, $E_{1,\varepsilon}$ denotes the event consisting of all locally finite $\varphi\subset\R^d$ such that $\varphi\cap Q_i\ne\es$ for all $i\in\{1,\ldots, (2d)^d\}$, where $Q_1,\ldots, Q_{(2d)^d}$ is a subdivision of $[-(1-\varepsilon)/2,(1-\varepsilon)/2]^d$ into congruent subcubes of side length $(1-\varepsilon)/(2d)$. In particular, if $Q_i\cap Q_j\ne\es$, then $|x_i-x_j|\le1$ for all $x_i\in Q_i$, $x_j\in Q_j$. 

Second, we demand that $X$ has a point close to the origin. This will allow us to pass through linear arrangements of adjacent cubes without deviating too much from the line segment connecting the centers of these cubes. More precisely, $E_{2,\varepsilon}$ denotes the event consisting of all locally finite $\varphi\subset\R^d$ with $\varphi\cap[-\varepsilon/4,\varepsilon/4]^d\ne\es$. Note that $|x-y|\le1$ for all $x\in[-\varepsilon/4,\varepsilon/4]^d$ and $y\in((1-\varepsilon)e_1+[-\varepsilon/4,\varepsilon/4]^d)$. Finally, for $\varepsilon\in(0,1)$ we say that $z\in\Z^d$ is \emph{$\varepsilon$-good} if $X-(1-\varepsilon)z\in E_{1,\varepsilon}\cap E_{2,\varepsilon}$. 

To begin with, we show that we can traverse quickly linear arrangements of good sites.
\begin{lemma}
\label{linArrLem}
Let $j\ge1$ and $\varepsilon\in(0,1)$ be such that the site $ie_1$ is $\varepsilon$-good for all $i\in\{0,\ldots,j\}$. Furthermore, let $x,y\in X$ be such that $x\in[-\varepsilon/4,\varepsilon/4]^d$ and $y\in(j(1-\varepsilon)e_1+[-\varepsilon/4,\varepsilon/4]^d)$. Then, $x$ and $y$ are $j$-connectable.
\end{lemma}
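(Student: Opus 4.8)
The plan is to produce an explicit chain of points of $X$ that passes through the centers of the good cubes and connects $x$ to $y$ in exactly $j$ hops. The only feature of goodness I would use is the second condition $E_{2,\varepsilon}$: since each site $ie_1$ with $i\in\{0,\ldots,j\}$ is $\varepsilon$-good, the translated configuration $X-(1-\varepsilon)ie_1$ lies in $E_{2,\varepsilon}$, so there is a point $c_i\in X$ with $c_i\in(1-\varepsilon)ie_1+[-\varepsilon/4,\varepsilon/4]^d$. For the two endpoints I would take $c_0=x$ and $c_j=y$; this is admissible because $x\in[-\varepsilon/4,\varepsilon/4]^d$ is already a center point of cube $0$ and $y\in j(1-\varepsilon)e_1+[-\varepsilon/4,\varepsilon/4]^d$ is a center point of cube $j$.

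Next I would check that consecutive centers lie within Euclidean distance $1$. Setting $u_i=c_i-(1-\varepsilon)ie_1$ and $v_{i+1}=c_{i+1}-(1-\varepsilon)ie_1$, one has $u_i\in[-\varepsilon/4,\varepsilon/4]^d$ and $v_{i+1}\in(1-\varepsilon)e_1+[-\varepsilon/4,\varepsilon/4]^d$, so that $c_i-c_{i+1}=u_i-v_{i+1}$ is of exactly the form for which the distance bound recorded when $E_{2,\varepsilon}$ was introduced yields $|c_i-c_{i+1}|\le1$. Hence $x=c_0,c_1,\ldots,c_{j-1},c_j=y$ is a sequence of points of $X$ whose consecutive pairs are at distance at most $1$; taking $c_1,\ldots,c_{j-1}$ as the intermediate points $X_{i_1},\ldots,X_{i_{j-1}}$ in the definition of connectability shows that $x$ and $y$ are $j$-connectable.

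Since the argument is a direct chaining construction, I do not expect a genuine obstacle. The single quantitative ingredient is the bound $|c_i-c_{i+1}|\le1$, and this is precisely the inequality already verified at the introduction of $E_{2,\varepsilon}$, applied after the translation by $(1-\varepsilon)ie_1$. It is worth noting that the first condition $E_{1,\varepsilon}$ plays no role in this particular lemma; it is only the ``point near the center'' property $E_{2,\varepsilon}$ that is used. Moreover, the definition of connectability permits repeated points, so no distinctness or degeneracy concern arises, and in the boundary case $j=1$ the chain collapses to the single hop $|x-y|\le1$, which is again exactly the $E_{2,\varepsilon}$ estimate.
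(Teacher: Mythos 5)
Your proof is correct and takes essentially the same route as the paper: the paper's argument is an induction on $j$ that reduces to the single-hop case $j=1$ and then invokes exactly the distance observation recorded when $E_{2,\varepsilon}$ was introduced, so your explicit chain through the $E_{2,\varepsilon}$-guaranteed points is simply the unrolled form of that induction. Your remark that only $E_{2,\varepsilon}$ (and not $E_{1,\varepsilon}$) is needed here is accurate.
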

\begin{proof}
Proceeding inductively, it suffices to consider the case $j=1$. But for $j=1$, the claim is immediate. Indeed, as observed above, we have $|x-y|\le1$.
\end{proof}

Even for large values of the intensity $\lambda$, the probability that the site $ie_1$ is $\varepsilon$-good for all $i\in\{0,\ldots,m\}$ decays exponentially fast in $m$. Therefore, we have to deal with the occasional occurrence of defects. In the following, we say that a set of sites $\Lambda\subset\Z^d$ is \emph{$*$-connected} if it forms a connected set in the graph whose vertices are given by $\Z^d$ and where $z,z'\in\Z^d$ are connected by an edge if $|z-z'|_\infty\le1$. We need a crude upper bound for the number of steps needed to traverse a set of cubes associated with a $*$-connected set of $\varepsilon$-good sites.
\begin{lemma}
\label{crudeBoundLem}
Let $\varepsilon>0$ and $\Lambda\subset\Z^d$ be a finite $*$-connected set of $\varepsilon$-good sites. Furthermore, let $x,x'\in X$ be such that $x\in(1-\varepsilon)(z+[-1/2,1/2]^d)$, $x'\in(1-\varepsilon)(z'+[-1/2,1/2]^d)$ for some $z,z'\in\Lambda$. Then $x$ and $x'$ are $k$-connectable for $k=(3+(2d)^d)\#\Lambda$.
\end{lemma}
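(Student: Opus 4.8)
The plan is to connect $x$ and $x'$ by an explicit chain of points of $X$ that walks through the cubes attached to a $*$-path in $\Lambda$, spending only a bounded number of hops in each cube. The key structural fact is that the cubes $(1-\varepsilon)(w+[-1/2,1/2]^d)$, $w\in\Z^d$, tile $\R^d$ with side length $1-\varepsilon$, and that in every $\varepsilon$-good cube the event $E_{1,\varepsilon}$ provides a point of $X$ in each of its $(2d)^d$ subcubes of side length $s=(1-\varepsilon)/(2d)$.

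First I would record a single geometric estimate. If two subcubes $Q,Q'$ (either of the same good cube, or of two $*$-adjacent good cubes) share at least a corner, then their coordinate intervals differ by at most $2s$ in each direction, so any $p\in Q$, $p'\in Q'$ satisfy $|p-p'|\le 2s\sqrt d=(1-\varepsilon)/\sqrt d\le1$. For subcubes inside a fixed good cube this is exactly the observation already recorded after the definition of $E_{1,\varepsilon}$; the point is that the same bound persists across the shared boundary of two $*$-adjacent good cubes. Consequently, a point of $X$ in $Q$ and a point of $X$ in $Q'$ are always $1$-connectable.

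Next, since $\Lambda$ is finite and $*$-connected and contains $z,z'$, I would fix a simple $*$-path $z=z_0,z_1,\dots,z_\ell=z'$ inside $\Lambda$ with $\ell\le\#\Lambda-1$; all $z_i$ are then $\varepsilon$-good. Because the cubes tile, two $*$-adjacent sites $z_i,z_{i+1}$ have cubes meeting in a face of some codimension, and the subcube of the $z_i$-cube sitting at the corner facing $z_{i+1}$ and the subcube of the $z_{i+1}$-cube at the corner facing $z_i$ can be chosen to share at least a corner; by the estimate above their $X$-representatives are $1$-connectable, and this is the single hop used to cross from one cube to the next. I would then regard $x$ itself as the representative of the subcube of the $z_0$-cube containing it (and $x'$ likewise in the $z_\ell$-cube). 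Within each cube one passes from the entry subcube to the exit subcube along a sequence of pairwise $*$-adjacent subcubes, taking the associated $X$-points; since there are only $(2d)^d$ subcubes, this costs at most $(2d)^d-1$ hops per cube. Adding the $\ell$ crossing hops gives a total of at most
$$(\ell+1)\big((2d)^d-1\big)+\ell=(\ell+1)(2d)^d-1\le (2d)^d\,\#\Lambda\le\big(3+(2d)^d\big)\#\Lambda,$$
which is the asserted bound.

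The only mildly delicate point is the bookkeeping in the last step: one must check that each good cube can be traversed in at most $(2d)^d-1$ hops and that consecutive cubes can be joined by a single hop, for which the identification of the adjacent boundary subcubes and the estimate $2s\sqrt d\le1$ are exactly what is needed. Everything else is routine, and the constant $3+(2d)^d$ leaves ample slack; indeed the factor $(2d)^d$ already suffices, and the argument uses only $E_{1,\varepsilon}$, the centering event $E_{2,\varepsilon}$ playing no role in this crude bound.
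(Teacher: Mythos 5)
Your proof is correct and follows essentially the same route as the paper's: walk along a $*$-path in $\Lambda$, cross between $*$-adjacent good cubes in a single hop via points of $X$ in the touching corner subcubes, and traverse each good cube through its $(2d)^d$ subcubes using the event $E_{1,\varepsilon}$. You simply make the induction explicit and track the constants a bit more carefully (obtaining $(2d)^d\#\Lambda$, which is within the stated bound $(3+(2d)^d)\#\Lambda$).
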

\begin{proof}
If $z=z'$, then the definition of $\varepsilon$-goodness implies that $x$ and $x'$ are $k'$-connectable for $k'=2+(2d)^d$. Next, if $z,z'$ are such that $|z-z'|_\infty\le1$, then, again by the definition of $\varepsilon$-goodness, there exist $y,y'\in X$ with $y\in (1-\varepsilon)(z+[-1/2,1/2]^d)$, $y'\in (1-\varepsilon)(z'+[-1/2,1/2]^d)$, and $|y-y'|\le1$.
Hence, the proof of Lemma~\ref{crudeBoundLem} is completed by an elementary induction argument on the length of the path in $\Lambda$ connecting $z$ and $z'$.
\end{proof}
The next step is to combine Lemmas~\ref{linArrLem} and~\ref{crudeBoundLem} into an upper bound that is useful in situations where the $*$-connected $\varepsilon$-bad components associated with the sites $ie_1$, $i\in\{0,\ldots,m\}$ only cover a small proportion of these sites. More precisely, let $U_m$ be the union of the $*$-connected $\varepsilon$-bad components associated with the sites $ie_1$, $i\in\{0,\ldots,m\}$. If $ie_1$ is $\varepsilon$-good, then we define its $*$-connected $\varepsilon$-bad component to be empty. Note that $U_m$ is almost surely finite provided that $\lambda$ is sufficiently large. 

Let $U_m^{(\infty)}$ denote the unbounded connected component of $\Z^d\setminus U_m$. Then, $U_m'=\Z^d\setminus U_m^{(\infty)}$ consists of $m'\ge1$ $*$-connected components $U_m^{(1)},\ldots, U_m^{(m')}$. Let $\partial U_m^{(i)}$ denote the \emph{outer boundary} of $U_m^{(i)}$, i.e., $\partial U_m^{(i)}$ consists of all $z\in\Z^d\setminus U_m^{(i)}$ such that $|z-z'|_\infty=1$ for some $z'\in U_m^{(i)}$. Note that $\partial U_m^{(i)}$ is $*$-connected, since the outer boundary of any $*$-connected set is again $*$-connected, see~\cite[Lemma 2.23]{aofpp} (related results can be found in~\cite{deu96,bCon}).

Next, we identify subsets of $\{o,e_1,\ldots,me_1\}$ that form linear arrangements of $\varepsilon$-good sites. To be more precise, we construct two finite increasing subsequences $(a_{i})_{1\le i\le m''}$ and $(b_{i})_{1\le i\le m''}$ of $\{0,\ldots,m\}$ inductively as follows. If $\{o,e_1,\ldots,me_1\}\subset U_m'$, then we put $m''=0$. Otherwise, choose $a_1=\min\{i\ge0:ie_1\not\in U_m'\}$ as the first site that is not contained in $U_m'$. Furthermore, let $b_1=\max\{i\in\{a_1,\ldots,m\}:ie_1\not\in U_m'\}$ be the last site after $a_1$ that is not contained $U_m'$. If $b_1=m$, then put $m''=1$ and terminate the construction. Otherwise, by definition, there is some $i_1\in\{1,\ldots,m'\}$ such that $(b_1+1)e_1\in U_m^{(i_1)}$. Define $a_2'=\max\{i\ge b_1:ie_1\in\partial U_m^{(i_1)}\}$. If $a_2'>m$, then put $m''=1$ and terminate the construction. Otherwise, define $a_2=a_2'$ and continue inductively. See Figure~\ref{gamkLEFig} for an illustration of this construction.

\begin{figure}[!htpb]
\centering
\begin{tikzpicture}[scale=1.0]
\fill[black!30!white] plot [smooth cycle,thick,tension=0.5] coordinates {(-1,0) (-0.7,0.7) (1,0) (0.2,-1)};
\fill[black!30!white] plot [smooth cycle,thick,tension=0.6] coordinates {(4,0) (4.4,0.9) (5.8,0) (4.7,-0.8)};
\fill[black!30!white] plot [smooth cycle,thick,tension=0.5] coordinates {(1.5,0) (2.5,1.1) (3.5,0) (3.3,-0.5) (2.5,0.5) (1.8,-0.3)};
\coordinate[label=-90:\small{${0}$}] (u) at (0,0);
\coordinate[label=-90:\small{$a_1$}] (u) at (1,0);
\coordinate[label=-90:\small{$b_1$}] (u) at (1.5,0.05);
\coordinate[label=-90:\small{$a_2$}] (u) at (3.5,0);
\coordinate[label=-90:\small{$b_2$}] (u) at (4.1,0.05);
\coordinate[label=-90:\small{$m$}] (u) at (5,0);
\coordinate[label=-90:\small{$U_m^{(1)}$}] (u) at (-0.5,0.5);
\coordinate[label=-90:\small{$U_m^{(2)}$}] (u) at (2.5,1);
\coordinate[label=-90:\small{$U_m^{(3)}$}] (u) at (4.5,0.8);
\draw[thick] (0,0)--(5,0);
\fill (0,0) circle (2pt);
\fill (1,0) circle (2pt);
\fill (1.5,0) circle (2pt);
\fill (3.5,0) circle (2pt);
\fill (4.0,0) circle (2pt);
\fill (5,0) circle (2pt);
\end{tikzpicture}
\caption{Construction of the sequences $(a_i)_{1\le i\le m''}$ and $(b_i)_{1\le i\le m''}$}\label{gamkLEFig}
\end{figure}
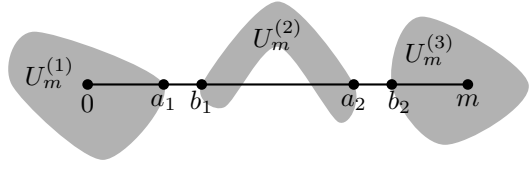

We make two crucial observations. First, the sites $je_1$ are $\varepsilon$-good for all $j\in\{a_i,\ldots,b_i\}$ and $i\in\{1,\ldots,m''\}$. Second, if $j<m'$ then the sites $b_je_1,a_{j+1}e_1$ are contained in the $*$-connected set $\partial U^{(i_j)}_m$. This allows us to make use of Lemma~\ref{crudeBoundLem}.

To summarize, we have derived bounds on the number of hops for traversing linear arrangements of $\varepsilon$-good cubes and for making detours around defects. These bounds are sufficient for our purposes provided that neither $o$ nor $me_1$ are contained in $U'_m$. In that situation, we need the following auxiliary result, where we write $\oplus$ for Minkowski addition.

\begin{lemma}
\label{inDefLem}
Let $i\in\{1,\ldots,m'\}$ and $x\in\mc{C}_\infty$ be such that $x\in(1-\varepsilon)(U^{(i)}_m\oplus[-1/2,1/2]^d)$. Then, there exists $x'\in (1-\varepsilon)( \partial U^{(i)}_m\oplus [-1/2,1/2]^d)$ such that $x$ and $x'$ are $k$-connectable for $k=c_1\#U_m^{(i)}$, where $c_1=c_1(d)\ge1$ is a constant depending only on the dimension $d$.
\end{lemma}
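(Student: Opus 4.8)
The plan is to use that $x\in\mc{C}_\infty$ lies in the \emph{unique} unbounded component, so that there is an infinite self-avoiding sequence of hops $x=p_0,p_1,\dots$ in $X$ leaving every bounded set. Since the cube region $R=(1-\varepsilon)(U_m^{(i)}\oplus[-1/2,1/2]^d)$ is bounded, this hop sequence must exit $R$; let $p_\ell$ be the first point with $p_\ell\notin R$. Because the cubes have side $1-\varepsilon<1$, the point $p_{\ell-1}$ sits in a cube indexed by some $z\in U_m^{(i)}$ while $p_\ell$ lies within Euclidean distance $1$ of that cube, so $z$ and the cube of $p_\ell$ are at $|\cdot|_\infty$-distance at most $2$; inspecting the hop segment $[p_{\ell-1},p_\ell]$ then shows that a suitable point $x'$ on it lies in a cube indexed by a site of the outer boundary $\partial U_m^{(i)}$. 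This already furnishes a valid target $x'$ together with a chain of hops connecting it to $x$, so the entire content of the lemma is the quantitative bound $k=c_1\#U_m^{(i)}$.

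To obtain this linear bound I would not follow the escaping path literally, since it may use arbitrarily many hops inside $R$. Instead I would pass to the cube level: the cubes visited before the exit form a sequence in $U_m^{(i)}\cup\partial U_m^{(i)}$ with consecutive cubes at $|\cdot|_\infty$-distance at most $2$ (again because $1-\varepsilon<1$), from which one extracts a self-avoiding cube path $w_0,\dots,w_t$ with $t\le\#(U_m^{(i)}\cup\partial U_m^{(i)})$. As the outer boundary of a $*$-connected set is $*$-connected and each site has at most $3^d-1$ neighbours, one has $\#\partial U_m^{(i)}\le(3^d-1)\#U_m^{(i)}$, hence $t\le 3^d\#U_m^{(i)}$. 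Combined with the efficient movement inside the $*$-connected boundary layer $\partial U_m^{(i)}$ supplied by Lemma~\ref{crudeBoundLem}, it then suffices to realise each step of the cube path by a number of genuine hops bounded by a constant depending only on $d$, which yields $k=c_1\#U_m^{(i)}$ for an explicit $c_1=c_1(d)$.

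The main obstacle is exactly this last step, because $U_m^{(i)}$ may contain $\varepsilon$-bad cubes whose internal connectivity is a priori uncontrolled, so that Lemma~\ref{crudeBoundLem} cannot be applied to $U_m^{(i)}$ itself. I would resolve it through a purely geometric observation that avoids threading $\mc{C}_\infty$ through bad cubes: since every cube has side $1-\varepsilon<1$, any point of $\mc{C}_\infty$ in a cube lies within Euclidean distance $<1$ of the near slabs of the face-adjacent cubes, so a point of $\mc{C}_\infty$ in a given cube can be joined in $O(1)$ hops to a point of $\mc{C}_\infty$ in a face-adjacent $\varepsilon$-good cube, whose subcube structure (the event $E_{1,\varepsilon}$) permits controlled traversal thereafter. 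Routing $x$ outward cube-by-cube along the self-avoiding cube path and into the good boundary layer $\partial U_m^{(i)}$, rather than through the bad interior, then keeps the per-step hop cost bounded by a dimensional constant and confines the whole construction to $U_m^{(i)}\cup\partial U_m^{(i)}$. Verifying that such an $O(1)$-hop step is always available, in particular when several bad cubes cluster together so that no good cube is face-adjacent, is where the careful work lies and is the part I expect to require the most attention.
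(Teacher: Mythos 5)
You have correctly identified the weak point of your own plan, but it is not a technicality to be checked with care: it is a gap that the cube-by-cube routing strategy cannot close. The sites of $U_m^{(i)}$ are (mostly) $\varepsilon$-bad, so the configuration of $X$ inside those cubes is completely uncontrolled; a point $x\in\mc{C}_\infty$ deep inside a cluster of bad cubes may be joined to the rest of $\mc{C}_\infty$ only by a long chain of points winding arbitrarily inside that cluster, so there is no bound, uniform in the configuration, on the number of hops needed to pass from one cube of your cube path to the next. Your proposed geometric fix also fails quantitatively: even when a face-adjacent cube is $\varepsilon$-good, the event $E_{1,\varepsilon}$ only guarantees a point of $X$ somewhere in each subcube of side $(1-\varepsilon)/(2d)$, and for $x$ sitting at the far face of its own cube the nearest guaranteed point can be at Euclidean distance up to roughly $(1-\varepsilon)\sqrt{(1+\tfrac{1}{2d})^2+\tfrac{d-1}{4d^2}}$, which exceeds $1$ once $\varepsilon$ is small (and $\varepsilon\to0$ in the application); moreover, as you note, a bad cube need not have any good face-adjacent cube at all. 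So the per-step hop cost is not bounded by a dimensional constant, and the linear bound $k=c_1\#U_m^{(i)}$ does not follow from this route.

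The paper's proof uses a different mechanism that sidesteps the bad cubes entirely. Take an arbitrary path $\gamma=\langle x=x_1,\ldots,x_k\rangle$ in $X$ with hops of length at most $1$ ending at a point $x'$ of the boundary layer (such a path exists since $x\in\mc{C}_\infty$ and the region is bounded), and cover the bounded set $(1-\varepsilon)\big((U_m^{(i)}\cup\partial U_m^{(i)})\oplus[-1/2,1/2]^d\big)$ by a set $S$ of at most $c_1'\#U_m^{(i)}$ points whose balls of radius $1/2$ cover it; this is possible with $c_1'=c_1'(d)$ because each cube has side at most $1$ and $\#\partial U_m^{(i)}\le(3^d-1)\#U_m^{(i)}$. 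The key observation is that if two points $x_j,x_{j'}$ of the path lie in the same ball $B_{1/2}(y)$ with $y\in S$, then $|x_j-x_{j'}|\le1$, so they are directly connectable and the entire intermediate portion of $\gamma$ can be skipped. Iterating this loop erasure produces a path meeting each ball at most twice, hence of length at most $2\#S\le 2c_1'\#U_m^{(i)}$, with no control needed on what $X$ looks like inside the bad cubes. Replacing your cube-level routing by this covering-and-shortcutting argument is what is required to make the lemma work.
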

\begin{proof}
Loosely speaking, we proceed as follows. Since $x$ is contained in $\mc{C}_\infty$, it is $k$-connectable to the boundary of $(1-\varepsilon)(U_m^{(i)}\oplus[-1/2,1/2]^d)$ for some $k\ge1$. Then, we make use of the observation in~\cite[Lemma 3.4]{yao} that the minimum such $k$ cannot be too large in comparison to $\#U^{(i)}_m$. To be more precise, let $\gamma=\langle x=x_1,\ldots,x_k\rangle$ be some path in $X$ consisting of hops of distance at most $1$ such that $x'=x_k$ is contained in $(1-\varepsilon)(\partial U_m^{(i)}\oplus [-1/2,1/2]^d)$. We note that there is a constant $c_1'=c_1'(d)\ge1$ with the following property. There exists a finite subset $S$ of $\R^d$ consisting of at most $c_1'\#U_m^{(i)}$ elements and such that for every $y\in (1-\varepsilon)((U_m^{(i)}\cup \partial U_m^{(i)})\oplus [-1/2,1/2]^d)$ there exists $y'\in S$ with $|y-y'|\le 1/2$. If there exist $y_1,\ldots,y_k\in S$ with $|x_j-y_j|\le 1/2$ for every $j\in\{1,\ldots,k\}$ and such that for every $j\in\{1,\ldots,k\}$ there exists at most one $j'\in\{1,\ldots,k\}\setminus\{j\}$ with $y_j=y_{j'}$, then the claim follows from the observation that $k\le2\#S\le 2c_1'\#U_m^{(i)}$. Hence, it remains to transform $\gamma$ into a $\gamma'$ path with that property. This can be achieved by using Lawler's method of loop erasure~\cite{saw}.

To be more precise, let $i_1\in\{1,\ldots,k\}$ be the largest index such that $|x_{i_1}-y_1|\le1/2$. In particular, $|x_1-x_{i_1}|\le1$ and $|x_{i_1}-x_{i_1+1}|\le1$. Now the construction proceeds inductively by defining $\gamma'$ as the path obtained by pasting the paths $\langle x_1,x_{i_1},x_{i_1+1}\rangle$ and $\gamma''$, where $\gamma''$ is the loop erasure of the path $\langle x_{i_1+1},\ldots,x_k\rangle$. 
\end{proof}

Let $m_\varepsilon(n)$ be the unique integer contained in the interval $[\tfrac{n}{1-\varepsilon}-\tfrac{1}{2},\tfrac{n}{1-\varepsilon}+\tfrac{1}{2})$. Combining Lemmas~\ref{linArrLem}--\ref{inDefLem}, we see that $q(o)$ and $q(ne_1)$ can be connected using at most 
\begin{align}
\label{dnEq}
k=m_\varepsilon(n)+(3+(2d)^d)\sum_{i=1}^{m'}\#\partial U^{(i)}_{m_\varepsilon(n)}+2c_1\#U'_{m_\varepsilon(n)}
\end{align}
hops. 
In order to translate this observation into an upper bound for $\mu$, it is important to have some control on the size of the random variables $\sum_{i=1}^{m'}\#\partial U^{(i)}_{m_\varepsilon(n)}$ and $\#U'_{m_\varepsilon(n)}$. In the following, we write $q_{\lambda,\varepsilon}$ for the probability that a fixed site is $\varepsilon$-bad. In particular, 
\begin{align}
\label{qUpBound}
q_{\lambda,\varepsilon}\le (2d)^d\exp(-\lambda(1-\varepsilon)^d (2d)^{-d})+\exp(-\lambda2^{-d}\varepsilon^{d}).
\end{align}

\begin{lemma}
\label{clustSizeLem}
If $q_{\lambda,\varepsilon}<2^{-3^d-1}$, then $\lim_{m\to\infty}\P(\sum_{i=1}^{m'}\#\partial U^{(i)}_{m}\ge 2^{3^d+2}3^dq_{\lambda,\varepsilon}m)=0$.
\end{lemma}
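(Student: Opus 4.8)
The plan is to use that the field of $\varepsilon$-good/bad sites is an i.i.d.\ Bernoulli field and that the hypothesis $q_{\lambda,\varepsilon}<2^{-3^d-1}$ places it deep in the subcritical regime of $*$-site percolation, so that $*$-connected bad clusters are a.s.\ finite with exponentially small tails. First I would record the independence: the $\varepsilon$-goodness of $z$ is a function of $X$ in the cube $(1-\varepsilon)z+[-(1-\varepsilon)/2,(1-\varepsilon)/2]^d$ (both $E_{1,\varepsilon}$ and $E_{2,\varepsilon}$ are determined inside this cube), and for distinct $z$ these cubes have disjoint interiors; hence by the independence property of the Poisson process the indicators $(\mathbb{1}[z\ \varepsilon\text{-bad}])_{z\in\Z^d}$ are i.i.d.\ with parameter $q_{\lambda,\varepsilon}$. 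Since $2^{3^d}q_{\lambda,\varepsilon}<1/2$, all bad clusters are a.s.\ finite.

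Next I would reduce the boundary sum to a sum of local cluster functionals. The key point is that the outer boundary of a component $U_m^{(i)}$ is \emph{external}: filling in the bounded complementary components of a $*$-connected set does not change its external boundary, so $\#\partial U_m^{(i)}$ equals the external boundary size of the single outermost bad cluster $B_i$ generating $U_m^{(i)}$, and hence $\#\partial U_m^{(i)}\le(3^d-1)\#B_i$. (That each component of $U_m'=\Z^d\setminus U_m^{(\infty)}$ is bounded by one $*$-connected bad cluster — distinct bad clusters being $*$-separated cannot jointly enclose a region — is exactly the $*$-connectivity duality recorded in \cite{aofpp,deu96,bCon}.) As every $B_i$ meets the segment $\{o,e_1,\dots,me_1\}$, counting each cluster through its bad line-sites gives the upper bound
$$\sum_{i=1}^{m'}\#\partial U_m^{(i)}\le 3^d\sum_{j=0}^{m}\#B(je_1)\,\mathbb{1}[je_1\ \varepsilon\text{-bad}]=:3^d\sum_{j=0}^m\xi_j,$$
where $B(je_1)$ denotes the bad cluster of $je_1$ and $\xi_j=\#B(je_1)\,\mathbb{1}[je_1\ \varepsilon\text{-bad}]$. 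Routing through the external boundary here is what lets me avoid controlling the (possibly much larger) filled component sizes.

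The third step is a Peierls estimate for $\E\xi_0$. Writing $A_k$ for the number of $*$-connected sets of size $k$ containing the origin, the crude bound $A_k\le 2^{3^d k}$ together with $\P(B(o)=S)\le q_{\lambda,\varepsilon}^{\#S}$ yields
$$\E\xi_0\le\sum_{k\ge1}k\,A_k\,q_{\lambda,\varepsilon}^{k}\le q_{\lambda,\varepsilon}\,2^{3^d}\sum_{k\ge1}k\bigl(2^{3^d}q_{\lambda,\varepsilon}\bigr)^{k-1}=\frac{2^{3^d}q_{\lambda,\varepsilon}}{(1-2^{3^d}q_{\lambda,\varepsilon})^2}<2^{3^d+2}q_{\lambda,\varepsilon},$$
the final strict inequality using $2^{3^d}q_{\lambda,\varepsilon}<1/2$, so that $(1-2^{3^d}q_{\lambda,\varepsilon})^2>1/4$. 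In particular $\xi_0\in L^1$ and $3^d\E\xi_0<2^{3^d+2}3^d q_{\lambda,\varepsilon}$, i.e.\ the mean of the right-hand side lies \emph{strictly} below the target threshold. Finally I would upgrade this to the high-probability statement by concentration: the sequence $(\xi_j)_{j\ge0}$ is a fixed measurable functional of the i.i.d.\ field, hence stationary and ergodic under the shift by $e_1$, so Birkhoff's ergodic theorem gives $m^{-1}\sum_{j=0}^{m}\xi_j\to\E\xi_0$ almost surely. Combined with $3^d\E\xi_0<2^{3^d+2}3^dq_{\lambda,\varepsilon}$, the event in the statement fails for all large $m$ almost surely, so its probability tends to $0$. (Alternatively, the exponential tails of $\xi_0$ give $\mathrm{Var}\bigl(\sum_{j\le m}\xi_j\bigr)=O(m)$, and Chebyshev's inequality delivers the same conclusion with a quantitative rate.)

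The main obstacle I anticipate is the second step together with the matching of constants: one must verify, via the $*$-connectivity duality/enclosure lemmas, that the outer boundary of each filled component genuinely reduces to the external boundary of a single unfilled bad cluster (so that no spurious large components arise from separated clusters), and simultaneously the Peierls constant must be pushed \emph{strictly} below $2^{3^d+2}3^dq_{\lambda,\varepsilon}$ so that the ergodic (or variance) argument can close the gap. The factor-two slack in the hypothesis $q_{\lambda,\varepsilon}<2^{-3^d-1}$, which forces $2^{3^d}q_{\lambda,\varepsilon}<1/2$, is precisely what provides this room.
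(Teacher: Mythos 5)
Your proposal is correct and follows essentially the same strategy as the paper: reduce the boundary sum to the total mass of the $\varepsilon$-bad $*$-clusters meeting the segment, bound the expected cluster size by the Peierls series $\sum_k k\,2^{3^dk}q_{\lambda,\varepsilon}^k=2^{3^d}q_{\lambda,\varepsilon}(1-2^{3^d}q_{\lambda,\varepsilon})^{-2}<2^{3^d+2}q_{\lambda,\varepsilon}$ using the $2^{3^dk}$ animal count, and finish with a law of large numbers. The two places where you deviate are worth noting. First, for the combinatorial reduction the paper does not pass through the ``single outermost bad cluster'' of each $U_m^{(i)}$ (your parenthetical enclosure claim is the most fragile part of your write-up and is not needed): it simply observes that every site of $\bigcup_i\partial U_m^{(i)}$ lies in $U_m^{(\infty)}$ and is $*$-adjacent to a bad site of $U_m$, whence $\sum_i\#\partial U_m^{(i)}\le3^d\#U_m$ directly; your chain of inequalities lands in the same place but via a more delicate topological statement. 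Second, for the limit the paper invokes the stochastic domination of $\#U_m$ by an i.i.d.\ sum $\sum_{i=0}^mR_i$ (Deuschel--Pisztora) and uses the classical LLN, whereas you sum the actual cluster sizes $\xi_j=\#B(je_1)\mathbb{1}[je_1\ \varepsilon\text{-bad}]$ and apply Birkhoff to this stationary ergodic factor of the i.i.d.\ site field; both are valid, and your route avoids citing the domination lemma at the cost of working with dependent summands. Your explicit verification that the goodness indicators are i.i.d.\ (disjoint cubes of side $1-\varepsilon$ centred at $(1-\varepsilon)z$) is implicit in the paper and is a welcome addition.
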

\begin{proof}
Since any site in $\cup_{i=1}^{m'}\partial U^{(i)}_{m}$ is $*$-adjacent to an $\varepsilon$-bad $*$-connected component intersecting $\{o,e_1,\ldots,me_1\}$, we have that
$$\sum_{i=1}^{m'}\#\partial U^{(i)}_{m}\le3^d\#U_{m}.$$
Furthermore, as shown in~\cite[Lemma 2.3]{deu96}, $\#U_{m}$ is stochastically dominated by $\sum_{i=0}^mR_i$, where $\{R_i\}_{0\le i\le m}$ is a family of iid random variables such that $R_i$ has the distribution of the size of the open $*$-connected component at the origin when considering Bernoulli site percolation with parameter $q_{\lambda,\varepsilon}$. The number of $*$-connected subsets of sites containing the origin and consisting of exactly $k\ge1$ sites is bounded above by $2^{3^dk}$, see~\cite[Lemma 9.3]{penrose}. Therefore, 
$$\E R_0\le \sum_{k=0}^{\infty}k2^{3^dk}q^k_{\lambda,\varepsilon}=\frac{2^{3^d}q_{\lambda,\varepsilon}}{(1-2^{3^d}q_{\lambda,\varepsilon})^2}<2^{3^d+2}q_{\lambda,\varepsilon}.$$
The claim now follows from the law of large numbers.
\end{proof}

\begin{lemma}
\label{clustSizeLem2}
If $q_{\lambda,\varepsilon}<2^{-3^d-1}$, then $\lim_{m\to\infty}\P(\#U'_{m}\ge 2^{3^d+4}3^{3d}d^2q_{\lambda,\varepsilon}m)=0$.
\end{lemma}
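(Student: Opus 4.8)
The plan is to mirror the proof of Lemma~\ref{clustSizeLem}, the one new feature being that $U'_m$ contains not only the $\varepsilon$-bad clusters but also the holes they enclose. First I would record a topological containment. Writing $\wh B$ for the union of a $*$-connected bad cluster $B$ with all bounded components of $\Z^d\setminus B$, I claim $U'_m\subseteq\bigcup_B\wh B$, where $B$ ranges over the $\varepsilon$-bad $*$-clusters meeting $\{o,e_1,\ldots,me_1\}$, i.e.\ over the $*$-components of $U_m$. Indeed, if $z\in U'_m\setminus U_m$ lies in a bounded component $V$ of $\Z^d\setminus U_m$, then the outer boundary of $V$ is $*$-connected (by~\cite[Lemma 2.23]{aofpp}; cf.~\cite{deu96,bCon}) and contained in $U_m$, hence in a single $*$-component $B$; as this boundary separates $z$ from infinity, $z\in\wh B$. (For $z\in U_m$ the containment is trivial.) This reduces matters to bounding $\sum_B\#\wh B$.

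Second, I would control the enclosed volume by an isoperimetric estimate. The exterior $*$-boundary of $\wh B$ is contained in that of $B$ and so has at most $(3^d-1)\#B$ sites; the isoperimetric inequality on $\Z^d$ then gives $\#\wh B\le c_d(\#B)^{d/(d-1)}\le c_d(\#B)^2$ for an explicit dimensional constant $c_d$ (alternatively one may use the crude bounding-box bound $\#\wh B\le(\diam_\infty B+1)^d$). The essential point is that $\#\wh B$ is dominated by a fixed power of $\#B$, so that the rare large holes are suppressed by the exponential smallness of large clusters in the subcritical regime $q_{\lambda,\varepsilon}<2^{-3^d-1}$.

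Finally, I would run the domination-and-law-of-large-numbers argument exactly as in Lemma~\ref{clustSizeLem}. Bounding each hole by the unique enclosing cluster yields
\[
\#U'_m\le\sum_{i=0}^{m}c_d\big(\#B(ie_1)\big)^{d/(d-1)}\mathbf{1}\{ie_1\text{ is }\varepsilon\text{-bad}\},
\]
where $B(ie_1)$ is the bad cluster at $ie_1$; by the coupling of~\cite[Lemma 2.3]{deu96} the summands are dominated by $c_dR_i^{d/(d-1)}$ with $\{R_i\}$ iid as in Lemma~\ref{clustSizeLem}. Using the lattice-animal count $2^{3^dk}$ from~\cite[Lemma 9.3]{penrose}, one gets $\E\,c_dR_0^{d/(d-1)}\le c_d\sum_{k\ge1}k^{d/(d-1)}2^{3^dk}q_{\lambda,\varepsilon}^k$, and for $q_{\lambda,\varepsilon}<2^{-3^d-1}$ this convergent series is controlled by its $k=1$ term up to a bounded factor, giving $\E\,c_dR_0^{d/(d-1)}<2^{3^d+4}3^{3d}d^2q_{\lambda,\varepsilon}$. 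Since this is precisely the per-site rate in the claimed threshold $2^{3^d+4}3^{3d}d^2q_{\lambda,\varepsilon}m$, the law of large numbers applied to $\tfrac1m\sum_{i=0}^m c_dR_i^{d/(d-1)}$ gives $\lim_{m\to\infty}\P(\#U'_m\ge 2^{3^d+4}3^{3d}d^2q_{\lambda,\varepsilon}m)=0$.

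The main obstacle is precisely the isoperimetric control of the enclosed holes: in contrast to Lemma~\ref{clustSizeLem}, the ratio $\#\wh B/\#B$ is unbounded (think of a thin bad ring), so no deterministic linear bound is available and one must genuinely trade the superlinear factor $k^{d/(d-1)}$ against the exponential decay of cluster sizes, keeping the leading order linear in $q_{\lambda,\varepsilon}$; pinning down the stated constant $2^{3^d+4}3^{3d}d^2$ is then a routine evaluation of the above series together with the dimensional constant $c_d$.
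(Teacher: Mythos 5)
Your proposal is correct and follows essentially the same route as the paper: an isoperimetric inequality bounding the filled-in region by (a dimensional constant times) the square of the bad-cluster size, the stochastic domination of the cluster sizes along the segment by iid subcritical cluster sizes via~\cite[Lemma 2.3]{deu96}, the second-moment computation using the lattice-animal count $2^{3^dk}$, and the law of large numbers. The only (harmless) difference is bookkeeping: you fill in each bad cluster $B$ individually and sum $c_d(\#B)^2$ over sites of the segment, whereas the paper applies the isoperimetric bound to the $*$-components $U_m^{(i)}$ of $U_m'$ and dominates $\sum_i(\#\partial U_m^{(i)})^2$ by $9^d\sum_iR_i^2$; both yield the constant $2^{3^d+4}3^{3d}d^2$.
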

\begin{proof}
By the isoperimetric inequality~\cite[Equation (2.1)]{deu96}, we have $\#U^{(i)}_m\le3^dd^2\big(\#\partial U^{(i)}_m\big)^2$ for all $i\in\{1,\ldots,m'\}$. Note that the factor $3^d$ is needed, since we consider outer boundaries with respect to $*$-adjacency.
Moreover, using the same notation as in the proof of Lemma~\ref{clustSizeLem}, the sum $\sum_{i=1}^{m'}\big(\#\partial U^{(i)}_m\big)^2$ is stochastically dominated by $9^{d}\sum_{i=0}^{m}R_i^2$, where 
$$\E R_0^2\le \sum_{k=0}^{\infty}k^22^{3^dk}q^k_{\lambda,\varepsilon}=\frac{(2^{3^d}q_{\lambda,\varepsilon}+1)2^{3^d}q_{\lambda,\varepsilon}}{(1-2^{3^d}q_{\lambda,\varepsilon})^3}<2^{3^d+4}q_{\lambda,\varepsilon}.$$
As before, the law of large numbers now implies the claim. 
\end{proof}

In order to prove Theorem~\ref{supCritCor3}, we need to decrease $\varepsilon$ accordingly in the size of $\lambda$. By the upper bound on $q_{\lambda,\varepsilon}$ derived in~\eqref{qUpBound}, we conclude that if we choose 
\begin{align}
\label{varepsEq}
\varepsilon=\varepsilon(\lambda)=2\lambda^{-1/d}(\log \lambda)^{1/d},
\end{align}
 then $\lim_{\lambda\to\infty}\varepsilon^{-1}q_{\lambda,\varepsilon}=0$.
\begin{proof}[Proof of Theorem~\ref{supCritCor3}]
Choose $\varepsilon$ as in~\eqref{varepsEq} and put $\mu^+=1+3\varepsilon$. Then, it suffices to show that $\P(D_n \ge n\mu^+)\to0$ as $n\to\infty$. Combining~\eqref{dnEq} with Lemmas~\ref{clustSizeLem} and~\ref{clustSizeLem2}, we see that it suffices to show that $m_\varepsilon(n)\le n(1+2\varepsilon)$. But since $1/(1-\varepsilon)<1+2\varepsilon$, this is an immediate consequence of the definition of $m_\varepsilon(n)$.
\end{proof}

\subsection*{Acknowledgments}
This research was supported by the Leibniz group on \emph{Probabilistic Methods for Mobile Ad-Hoc Networks}. The author thanks W.~K\"onig for introducing him to the model of bounded-hop percolation and for the encouragement to investigate the asymptotic behavior of $\Theta(k,r)$. Furthermore, the author is also grateful for many helpful discussions and remarks on earlier versions of the manuscript. 
\bibliography{}
\bibliographystyle{abbrv}
\end{document}